\documentclass[11pt]{article}
%%%%%%%%%%%%%%%%%%%%%%%%%%%%%%%%%%%%%%%%%%%%%%%%%%%%%%%%%%%%%%%%%%%%%%%%%%%%%%%%%%%%%%%%%%%%%%%%%%%%%%%%%%%%%%%%%%%%%%%%%%%%%%%%%%%%%%%%%%%%%%%%%%%%%%%%%%%%%%%%%%%%%%%%%%%%%%%%%%%%%%%%%%%%%%%%%%%%%%%%%%%%%%%%%%%%%%%%%%%%%%%%%%%%%%%%%%%%%%%%%%%%%%%%%%%%
\usepackage{makeidx}
\usepackage[centertags]{amsmath}
\usepackage{amsthm}
\usepackage{newlfont}
\usepackage{amsmath}
\usepackage{amsfonts,amsmath,latexsym}
\usepackage{mathrsfs}
\usepackage{amssymb}
\usepackage{amsbsy}
\usepackage{indentfirst}
\usepackage{mathrsfs}
\usepackage{graphicx}
\usepackage{epstopdf}
\usepackage{subfigure}
\usepackage{caption}
\usepackage{grffile}
\usepackage{float}
\usepackage{amssymb, amsmath}
\usepackage{xcolor}
\usepackage{enumitem}
\usepackage{CJK, CJKnumb, CJKulem, verbatim}
\usepackage[colorlinks, citecolor=blue, linkcolor=blue]{hyperref}
\usepackage{diagbox}
\usepackage{authblk} % 用于处理作者和机构信息

\setcounter{MaxMatrixCols}{10}
%TCIDATA{OutputFilter=LATEX.DLL}
%TCIDATA{Version=5.50.0.2960}
%TCIDATA{Codepage=936}
%TCIDATA{<META NAME="SaveForMode" CONTENT="1">}
%TCIDATA{BibliographyScheme=Manual}
%TCIDATA{Created=Sat Oct 07 09:38:23 2006}
%TCIDATA{LastRevised=Friday, September 27, 2019 11:22:08}
%TCIDATA{<META NAME="GraphicsSave" CONTENT="32">}

\bibliographystyle{agsm}
\titlepage
\textheight 235 truemm \textwidth 160 truemm \topmargin = -1.5cm
\oddsidemargin = 0.0cm \evensidemargin = -0.0cm \hfuzz2pt
\newlength{\defbaselineskip}
\setlength{\defbaselineskip}{\baselineskip}

\newcommand{\ep}{\epsilon}

\newcommand{\be}{\begin{eqnarray}}
\newcommand{\ee}{\end{eqnarray}}
\newcommand{\bestar}{\begin{eqnarray*}}
\newcommand{\eestar}{\end{eqnarray*}}

\newcommand{\nn}{\nonumber}
\newcommand{\ignore}[1]{}
{} \theoremstyle{plain}
\newtheorem{thm}{Theorem}[section]

\newtheorem{lemma}{Lemma}[section]

\newtheorem{exam}{Example}[section]
\theoremstyle{definition}

\newtheorem{rem}{Remark}[section]
\numberwithin{equation}{section}

\def\k{\kappa}

\def\>{\geq}
\def\<{\leq}

\allowdisplaybreaks
\graphicspath{{./graphics/}}

\title{A central limit theorem  for unbalanced step-reinforced random walks}

\author[a]{Zhishui Hu}
\author[b]{Liang Dong\thanks{ E-mail addresses: huzs@ustc.edu.cn (Z. Hu), 749287661@qq.com (L. Dong)}}

\affil[a]{Department of Statistics and Finance, School of Management\par
	University of Science and Technology of China\par
	Hefei, Anhui 230026, China}
\affil[b]{School of Mathematics and Statistics,
	Suzhou University of Technology\par
	Changshu, Jiangsu 215500, China}

\date{}

\begin{document}
	\maketitle
\begin{abstract} 

In this paper, we study a class of unbalanced step-reinforced random walks that unifies   the  elephant random
 walk,  the positively step-reinforced random 
walk, and the negatively step-reinforced random 
walk. By establishing a connection with  bond percolation on  random recursive trees,   these processes can be represented as randomly weighted sums
of independent and identically distributed random variables. We first derive normal and stable central limit theorems for such randomly weighted sums,  and then apply  these results
to obtain a unified central limit theorem for   unbalanced step-reinforced random 
walks.

\vskip 0.2cm \noindent{\it Key words:}  Reinforcement,  random walk, stable limit theorem, randomly weighted sum
 \vskip 0.2cm

\noindent{\it Mathematics Subject Classification}
 60F05;  60G50;   % Sums of independent random variables; random walks
 60G52
   
\end{abstract}

\section{Introduction and main result}  \label{sect1}

Let $\xi_1, \xi_2, \cdots$ be a sequence of i.i.d. random variables. Aguech et al. \cite{A2025} introduced a new class of step-reinforced random walk,  defined as follows: let $p$ and $r$ be two fixed parameters in $[0,1]$, set  $X_1=\xi_1$ and for $n\ge 2$ define recursively
\bestar
{X}_n :=
\left\{
\begin{array}{ccc}
{X}_{U_n}, & \text{with probability} & rp,\\
 -X_{U_n}, & \text{with probability} & (1-r)p, \\
 \xi_{n}, & \text{with probability} & 1-p, \\
\end{array}
\right.
\eestar
where $\{U_n, n\ge 2\}$ is a sequence of  independent random variables such that for each $n$, $U_n$ is uniformly distributed on $ \{ 1,2, \cdots ,n-1 \}$, and the sequences $\{U_n\}$ and $\{\xi_k\}$ are independent.  The   process 
\bestar
T_n=\sum_{k=1}^n X_k,~~n\ge 1
\eestar
 is called the unbalanced step-reinforced random walk.

A simple and well-known  example of a step-reinforced random walk  is the elephant random
 walk (ERW),   introduced by Sch\"utz and Trimper \cite{ST2004}. The ERW has attracted considerable attention in  recent years; see 
\cite{BB2016,  BE2017, B2022, Col2017a, Col2017b, DFHM2023, KT2019, K2016, Qin2025} and the references therein. As generalizations of the ERW, the positively and negatively step-reinforced random walks were introduced  by Simon \cite{S1955} and Bertoin \cite{B2024}, respectively,  and have been extensively studied  in \cite{BR2022, B2021A, B2021B, B2024, Bu2018, HZ2024, H2025}. The unbalanced step-reinforced random walk  provides a unified framework that includes the ERW and the positively and negatively step-reinforced random walks.  Specifically,  when $p=1$ and $\mathbb{P}(\xi_1=1)=1-\mathbb{P}(\xi_1=-1)=s$ with $s\in [0,1]$,
the process  $(T_n)_{n\ge 1}$ reduces to the ERW; when  $r=1$, $(T_n)_{n\ge 1}$ corresponds to the   positively  step-reinforced random walk; and when $r=0$,  it corresponds to the  negatively  step-reinforced random walk.  

 Aguech et al. \cite{A2025} established  strong laws of large numbers and central limit theorems for the unbalanced step-reinforced random walk  $(T_n)_{n\ge 1}$.
Under the moment condition $\mathbb{E}(\xi_1^2)<\infty$,  they proved that  a phase transition occurs at the critical point $a:=(2r-1)p=1/2$.

\begin{thm}\label{th1.1} (\cite{A2025}) Let $\xi_1, \xi_2, \cdots$ are i.i.d. random variables with finite second moment. Define
\bestar
a:=(2r-1)p, \qquad \mu:=\frac{1-r}{1-a}\mathbb{E}(\xi_1), \qquad \sigma^2:=\mathbb{E}(\xi_1^2)-\mu^2.
\eestar
Then the following limit theorems hold:
\begin{enumerate}
\item[(i)] Subcritical regime ($a<1/2$):  
\be 
\frac{T_n-n\mu}{\sqrt{n}} \stackrel{d}{\longrightarrow} N(0, \sigma^2/(1-2a)); \label{aresult}
\ee 

\item[(ii)] Critical regime ($a=1/2$):
\bestar
\frac{T_n-n\mu}{\sqrt{n \log n}} \stackrel{d}{\longrightarrow} N(0, \sigma^2);
\eestar

\item[(iii)]Supercritical regime ($a>1/2$):
\bestar
\frac{T_n-n\mu}{n^{a}}  \longrightarrow  L~~\mbox{a.s.},
\eestar
where $L$ is non-gaussian square-integrable random variable.
\end{enumerate}
\end{thm}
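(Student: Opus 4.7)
My plan is to exploit the bond-percolation representation on random recursive trees flagged in the abstract, which reduces the study of $T_n$ to that of a randomly weighted sum of i.i.d.\ innovations. First I would build the random recursive tree $\mathcal{T}_n$ on $\{1,\dots,n\}$ by attaching each vertex $k\ge 2$ to its uniform parent $U_k$, and independently mark every edge with a symbol in $\{+,-,\star\}$ of respective probabilities $rp$, $(1-r)p$ and $1-p$; the $\star$-edges are cuts, and they partition $\mathcal{T}_n$ into clusters. A direct induction on the recursive definition of $X_k$ then yields $X_k=\varepsilon_k\,\xi_{\rho(k)}$, where $\rho(k)$ is the innovation root of the cluster containing $k$ and $\varepsilon_k\in\{\pm 1\}$ is the product of the $\pm$-marks along the path from $\rho(k)$ to $k$. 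Summing over clusters produces the randomly weighted representation
\bestar
T_n = \sum_{j=1}^{n}\xi_j\, W_{n,j},\qquad W_{n,j}:=\mathbf{1}_{\{\rho(j)=j\}}\sum_{k\in C_n(j)}\varepsilon_k,
\eestar
in which $(W_{n,j})_{j\le n}$ is measurable with respect to the tree and the marks, hence independent of $(\xi_j)_{j\ge 1}$.

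Next I would analyse the weights. The conditional-mean identity $\mathbb{E}(X_k\mid X_1,\dots,X_{k-1})=\frac{a}{k-1}\sum_{i<k}X_i+(1-p)\mathbb{E}\xi_1$, with $a=(2r-1)p$, is a standard P\'olya-urn recursion, so the usual martingale argument for random recursive trees gives $\mathbb{E}W_{n,j}\asymp (n/j)^{a}$ at innovation vertices $j$. An analogous but heavier computation for second moments leads to $\sum_{j=1}^n\mathbb{E}W_{n,j}^{2}\asymp n/(1-2a)$ when $a<1/2$, $\asymp n\log n$ when $a=1/2$, and $\asymp n^{2a}$ when $a>1/2$, which are exactly the normalizations appearing in the theorem. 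In the first two regimes a law-of-large-numbers argument yields $v_n^{-1}\sum_j W_{n,j}^{2}\to V$ in probability with deterministic $V$ ($V=1/(1-2a)$ or $V=1$); in the supercritical case the rescaled array $n^{-a}(W_{n,j})_{j\ge 1}$ converges almost surely to a non-degenerate random limit driven by the P\'olya-urn martingale.

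Third, with these asymptotics in hand I would invoke the normal and stable CLTs for randomly weighted sums established earlier in the paper. Conditioning on the weights, the subcritical and critical regimes reduce to a triangular-array Lindeberg CLT with deterministic variance profile $\sigma^{2}v_n V$; the finite second moment of $\xi_1$ supplies the Lindeberg condition because $\max_j|W_{n,j}|/\sqrt{v_n}\to 0$ in probability. Deconditioning, together with the identification of the mean term $\sum_{j}\mathbb{E}(W_{n,j})\cdot \mathbb{E}\xi_1=n\mu$, then gives parts (i) and (ii). For part (iii) I would check that $n^{-a}(T_n-n\mu)$ is an $L^{2}$-bounded martingale in the natural filtration, conclude almost sure convergence from Doob's theorem, and transfer non-Gaussianity of the limit from the non-Gaussian limit of $n^{-a}W_{n,1}$ through the randomly weighted sum representation.

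The step I expect to be the main obstacle is the critical regime $a=1/2$: there the weights sit exactly at the borderline of P\'olya-urn martingale integrability, so producing both the $\log n$ factor in the normalizer and the probability convergence $v_n^{-1}\sum_j W_{n,j}^{2}\to 1$ requires a careful logarithmic refinement of the usual recursive second-moment estimates, and the Lindeberg verification demands a sharp control on $\max_j W_{n,j}^{2}$ in order to rule out a mixed-Gaussian limit. Once this logarithmic bookkeeping is handled, the subcritical and supercritical regimes follow routinely from the generic randomly weighted sum CLT and from a direct martingale argument, respectively.
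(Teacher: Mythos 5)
Theorem \ref{th1.1} is quoted from Aguech et al.\ \cite{A2025}; the present paper never proves it, and its own machinery (the representation (\ref{weightwnk}) plus Theorems \ref{th2} and \ref{th3}) is deployed only for Theorem \ref{th1027}, where the hypothesis that $\sum_{k\le n}\xi_k/a_n$ converges to a symmetric stable (or standard normal) law makes the steps effectively centred, so the $n\mu$ centring never has to be handled. So your proposal must stand on its own. Its first two steps do match the paper's toolkit: your $W_{n,j}$ are exactly the weights of (\ref{weightwnk}), and the weight asymptotics you want are Lemmas \ref{lemma1}--\ref{lemmaa1} together with (\ref{c2pr}), giving $\frac1n\sum_k W_{nk}^2\to 1/(1-2a)$ in probability in the subcritical regime.

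The genuine gap is in your deconditioning step for (i) and (ii). Given the weights, $\sum_j W_{nj}\xi_j$ has conditional mean $\mathbb{E}(\xi_1)\,G_n$ with $G_n:=\sum_j W_{nj}$ and conditional variance $\mathrm{Var}(\xi_1)\sum_j W_{nj}^2$. You centre at $n\mu$ by appealing to $\mathbb{E}(G_n)\,\mathbb{E}(\xi_1)=n\mu+o(\sqrt n)$, but the conditional mean is \emph{random}: $G_n$ is exactly the walk $T_n$ run with $\xi\equiv1$, so $G_n-n\mu_W$ (where $\mu_W$ is the centring constant of the $\xi\equiv1$ walk, i.e.\ $\mu=\mathbb{E}(\xi_1)\mu_W$) fluctuates at precisely the scale $\sqrt n$ (resp.\ $\sqrt{n\log n}$) by which you normalise. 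Discarding it, your conditional Lindeberg argument delivers $N\big(0,\mathrm{Var}(\xi_1)/(1-2a)\big)$, which is the wrong limit whenever $\mathbb{E}(\xi_1)\neq 0$: the theorem's variance uses $\sigma^2=\mathbb{E}(\xi_1^2)-\mu^2\neq\mathrm{Var}(\xi_1)$. What is needed is the decomposition $T_n-n\mu=\sum_j W_{nj}(\xi_j-\mathbb{E}\xi_1)+\mathbb{E}(\xi_1)(G_n-n\mu_W)$ together with (a) a CLT for $G_n$ itself, which your weighted-sum reduction cannot supply --- it is the $\xi\equiv1$ instance of the very statement being proved, so it must come from a separate argument, e.g.\ the martingale approach of \cite{A2025} or \cite{BE2017} --- and (b) an asymptotic-independence step for the two summands, available because the conditional variance of the first converges in probability to a constant. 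Then the variances add: $\mathrm{Var}(\xi_1)/(1-2a)+(\mathbb{E}\xi_1)^2(1-\mu_W^2)/(1-2a)=\sigma^2/(1-2a)$, and similarly in the critical regime. (Incidentally, your own drift recursion has innovation term $(1-p)\mathbb{E}(\xi_1)$ and hence yields $\mu_W=(1-p)/(1-a)$; reconcile this with the constant $\mu$ as printed before relying on it.)

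Two smaller repairs for (iii): $n^{-a}(T_n-n\mu)$ is not a martingale; one must normalise by $\pi_n=\prod_{k=1}^{n-1}(1+a/k)\sim c\,n^{a}$ and compensate the $(1-p)\mathbb{E}(\xi_1)$ drift before invoking Doob's $L^2$ convergence theorem, and the non-Gaussianity of $L$ requires its own argument (e.g.\ a moment or distributional-identity computation) rather than a transfer claim.
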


As shown in Theorem \ref{th1.1}, when the second moment is finite, $T_n$ 
converges in distribution to a normal law in both the subcritical and critical regimes. 
A natural question is whether analogous limit theorems hold when $\xi_1$ belongs to the domain of attraction of a normal distribution. 
To the best of our knowledge,  no such results have been established even for the positively or negatively step-reinforced random walks (i.e., $r=0$ or $1$). Furthermore, 
the case in which $\xi_1$ belongs to the domain of attraction of a stable distribution is also of interest.

In this paper, we focus on the subcritical regime. For simplicity, we restrict our attention to the case where $\xi_1$ belongs to the domain of normal attraction of either a normal distribution or a symmetric stable distribution.
To state our main result, we first define a special ERW.
Set $X_1^0=1$ and for $n\ge 2$,
\bestar
{X}_n^0:=%(2\eta_n-1)X_{U_n}^0=
\left\{
\begin{array}{ll}
{X}^0_{U_n}, & \text{with probability}~ r,\\
 -X^0_{U_n}, & \text{with probability}~ 1-r,
\end{array}
\right.
\eestar
where $\{U_n, n\ge 2\}$ is a sequence of  independent random variables with each $U_n$
uniformly distributed on $\{1, 2,\cdots, n-1\}$.
Then
$
T_n^0:=\sum_{k=1}^n X^0_n
$
is an ERW with  initial value  $T_1^0=1$ and the memory parameter $r$.

We now present the main result, which provides a unified central limit theorem for the unbalanced step-reinforced random walk.

\begin{thm} \label{th1027}
	Assume that $\alpha \in (0,2],  p\in (0,1), r\in [0,1]$ and $(2r-1)\alpha p<1$.
	Let $\xi_1, \xi_2, \cdots$ be i.i.d. random variables such that
\bestar
	\frac{1}{a_n}\sum_{k=1}^n \xi_k  \stackrel{d}{\rightarrow} S,
\eestar
 where $S$ is a symmetric $\alpha$-stable random variable 
with characteristic function 
$
\mathbb{E}(e^{it S})=e^{-|t|^{\alpha}}.
$
Then
\bestar
		\frac{T_n}{a_n} \stackrel{d}{\longrightarrow} (c(\alpha,p,r))^{1/\alpha} S,
		\eestar
where
\be 
c(\alpha,p,r)=\frac{1-p}{p}\sum_{k=1}^{\infty} \mathbb{E}(|T_k^0|^{\alpha})\mathrm{B}(k, 1+1/p)     \label{calphapr}
	\ee 
and $B$ denotes the beta function.
\end{thm}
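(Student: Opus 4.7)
The plan is to express $T_n$ as a randomly weighted sum of the i.i.d.\ innovations $(\xi_j)$ via the bond-percolation-on-recursive-tree representation promised in the abstract, and then to feed this decomposition into the stable CLT for randomly weighted sums that the paper established earlier.

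To set up the representation, I would build the random recursive tree on $\{1,\dots,n\}$ by attaching each vertex $k\ge 2$ to $U_k$. Independently, mark the edge $\{k,U_k\}$ as \emph{retained} with probability $p$ and \emph{cut} with probability $1-p$, and, on a retained edge, assign a sign $\varepsilon_k$ equal to $+1$ with conditional probability $r$ and $-1$ with conditional probability $1-r$. The cut edges split $\{1,\dots,n\}$ into clusters, whose roots form the random set $I_n\ni 1$ of innovation indices. A short induction on $k$ using the defining recursion for $X_k$ gives, for every $k$ lying in the cluster $C_j$ rooted at $j\in I_n$, $X_k=\xi_j\prod_{\ell\in\mathrm{path}(k\to j)}\varepsilon_\ell$, whence
\[
T_n=\sum_{j\in I_n} W_{n,j}\,\xi_j,\qquad W_{n,j}:=\sum_{k\in C_j}\prod_{\ell\in\mathrm{path}(k\to j)}\varepsilon_\ell .
\]
By standard exchangeability for bond percolation on random recursive trees, conditionally on $|C_j|=m$ the induced sign recursion inside $C_j$ is exactly the construction of $T^0$, so $W_{n,j}\stackrel{d}{=}T_m^0$. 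The vector of weights is independent of $(\xi_j)_{j\in I_n}$, so the conclusion will follow from the paper's randomly weighted stable CLT as soon as one verifies
\[
\frac{1}{a_n^\alpha}\sum_{j\in I_n}|W_{n,j}|^\alpha\stackrel{P}{\longrightarrow}c(\alpha,p,r),
\]
together with the usual negligibility condition $\max_{j\in I_n}|W_{n,j}|/a_n\stackrel{P}{\to}0$.

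To identify the constant I would introduce $M_{n,k}:=\#\{j\in I_n:|C_j|=k\}$ and track the effect of adding vertex $n$: with probability $1-p$ it starts a new cluster of size $1$, otherwise $U_n$ picks an existing cluster in a size-biased manner and grows it by one. This produces
\[
\mathbb{E}(M_{n,k})=\mathbb{E}(M_{n-1,k})\Bigl(1-\frac{pk}{n-1}\Bigr)+(1-p)\mathbf{1}\{k=1\}+\frac{p(k-1)}{n-1}\mathbb{E}(M_{n-1,k-1}).
\]
Substituting the ansatz $\mathbb{E}(M_{n,k})\sim n\mu_k$ yields the triangular recurrence $\mu_k(1+pk)=(1-p)\mathbf{1}\{k=1\}+p(k-1)\mu_{k-1}$, whose closed-form solution is $\mu_k=\tfrac{1-p}{p}B(k,1+1/p)$; summing against $\mathbb{E}(|T_k^0|^\alpha)$ reproduces exactly the constant $c(\alpha,p,r)$ of \eqref{calphapr}. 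The condition $(2r-1)\alpha p<1$ is what guarantees convergence of that series: $B(k,1+1/p)\asymp k^{-1-1/p}$, while the known ERW asymptotics give $\mathbb{E}(|T_k^0|^\alpha)=O(k^{\alpha/2})$ for $r\le 3/4$ (automatically summable since $\alpha p<2$ under the hypotheses) and $\mathbb{E}(|T_k^0|^\alpha)=O(k^{\alpha(2r-1)})$ for $r>3/4$ (summable exactly under $(2r-1)\alpha p<1$); the same range controls the largest $|W_{n,j}|$ against $a_n=n^{1/\alpha}$ and secures the negligibility condition.

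The main obstacle will be promoting this first-moment computation to the in-probability convergence of $n^{-1}\sum_{j\in I_n}|W_{n,j}|^\alpha$. I would do this with a second-moment estimate which requires joint moment control over distinct clusters; these are not independent but interact only through the recursive-tree dynamics, so an urn/martingale argument analogous to the one above for $\mathbb{E}(M_{n,k})$, combined with a dominated-convergence step justified by summability of $\mu_k\mathbb{E}(|T_k^0|^{2\alpha})$, should yield $\mathrm{Var}\bigl(n^{-1}\sum_{j}|W_{n,j}|^\alpha\bigr)\to 0$. Together with the tail bound on $\max_j|W_{n,j}|$, the randomly weighted stable CLT of the previous section then delivers $T_n/a_n\stackrel{d}{\longrightarrow}c(\alpha,p,r)^{1/\alpha}S$.
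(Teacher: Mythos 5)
Your overall architecture (percolation representation of $T_n$ as $\sum_j W_{nj}\xi_j$, conditional identification $W_{nj}\stackrel{d}{=}T^0_m$ given cluster size $m$, cluster-size frequencies $\frac{1-p}{p}\mathrm{B}(k,1+1/p)$, and summability of $\mathbb{E}(|T_k^0|^{\alpha})\mathrm{B}(k,1+1/p)$ under $(2r-1)\alpha p<1$) is essentially the paper's, and your identification of the constant is correct (minor slip: at $r=3/4$ the bound is $O((k\log k)^{\alpha/2})$, not $O(k^{\alpha/2})$, which is harmless). However, there is a genuine gap in the reduction step. You claim the conclusion follows from the randomly weighted stable CLT once one has $a_n^{-\alpha}\sum_j|W_{nj}|^{\alpha}\stackrel{P}{\to}c(\alpha,p,r)$ together with the negligibility condition $\max_j|W_{nj}|/a_n\stackrel{P}{\to}0$. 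This is not what Theorems \ref{th2} and \ref{th3} require, and it is not sufficient in the generality of Theorem \ref{th1027}. First, the hypothesis of the theorem is only that $\sum_{k\le n}\xi_k/a_n\stackrel{d}{\to}S$ for some sequence $a_n$; nothing forces $a_n=n^{1/\alpha}$ (for $\alpha=2$ the interesting new case is precisely $\xi_1$ in the domain of attraction of the normal law with infinite variance, e.g. $a_n=\sqrt{n\log n}$), so the correct weight condition is the $n$-normalized one, $(A3)$/$(A5)$: $n^{-1}\sum_k|W_{nk}|^{\alpha}\stackrel{P}{\to}c(\alpha,p,r)$; your $a_n^{-\alpha}$-normalized statement is false whenever $a_n/n^{1/\alpha}$ does not converge to $1$. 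Second, and more importantly, once $a_n$ is not comparable to $n^{1/\alpha}$, the pair (power-sum convergence, max negligibility) does \emph{not} imply the weighted CLT with normalization $a_n$: the paper's Example 2.3 exhibits weights with $n^{-1}\sum_kW_{nk}^2=1$ and $\max_k|W_{nk}|/\sqrt n\to0$ for which $a_n^{-1}\sum_kW_{nk}\xi_k$ converges to the wrong (degenerate up to a $\sqrt{\log n/\log m(n)}$ factor) limit. This is exactly why Theorems \ref{th2} and \ref{th3} impose the uniform-integrability conditions $(A4)$/$(A6)$, i.e. $\lim_{c\to\infty}\sup_n n^{-1}\sum_k\mathbb{E}(|W_{nk}|^{\beta}I(|W_{nk}|>c))=0$ for some $\beta>\alpha$, which are strictly stronger than your max condition.

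Consequently the missing step in your proposal is the verification of $(A4)$/$(A6)$ for the percolation weights, which is the real content of the paper's proof of Theorem \ref{th1027}: one chooses $\beta$ with $\max\{\alpha,1\}<\beta<4$, $(2r-1)\beta p<1$ and $\beta p<2$, and shows $\sup_n n^{-1}\sum_k\mathbb{E}(|W_{nk}|^{\beta})<\infty$ by combining the fourth-moment bounds $\mathbb{E}(|T_k^0|^{\beta})=O((a_r(k))^{\beta/2})$ (Lemma \ref{lemma0}, which is why moments up to order $4$ of the auxiliary ERW are needed) with the cluster-count moment estimates of Lemma \ref{lemma38}. A tail bound on $\max_j|W_{nj}|$ cannot substitute for this. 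On the other point you flag as the ``main obstacle'' (upgrading the first-moment computation to convergence in probability of $n^{-1}\sum_j|W_{nj}|^{\alpha}$), your variance/urn plan would probably work but is heavier than necessary: the paper sidesteps joint moments of distinct clusters by using the known convergence $\nu_k(n)/n\stackrel{P}{\to}\frac{1-p}{p}\mathrm{B}(k,1+1/p)$ for each fixed $k$ (Bertoin's Lemma 3.1) together with a law of large numbers inside clusters of a fixed size, and controls the tail over cluster sizes $k\ge m$ purely in $L^1$ via Markov's inequality and the estimate $\limsup_m\limsup_n n^{-1}\sum_{k\ge m}k^{l}\mathbb{E}(\nu_k(n))=0$ of Lemma \ref{lemma38}; you may want to adopt that route, and in any case you must add the $(A4)$/$(A6)$ verification before invoking Theorems \ref{th2} and \ref{th3}.
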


\begin{rem} \label{rema1}
When $\alpha=2$ and $(4r-2)p<1$, we have 
\be
c(2, p,r)=\frac{1}{1-(4r-2)p}   \label{c2pr}
\ee
(see  Section \ref{sect4} for the proof).
Consequently, Theorem \ref{th1027} implies that if $\sum_{k=1}^n \xi_k/a_n  \stackrel{d}{\rightarrow} N(0,1)$ and $(4r-2)p<1$, then 
\bestar
\frac{T_n}{a_n} \stackrel{d}{\longrightarrow}  N\Big(0, \frac{1}{1-(4r-2)p}\Big).
\eestar
This generalizes the  limit theorem (\ref{aresult}) obtained by  Aguech et al. \cite{A2025}.
\end{rem}

\begin{rem} Assume that $\alpha\in (0, 2)$.
For the positively step-reinforced random walk  in the subcritical regime (i.e., $r=1$ and $\alpha p<1$),
Businger \cite{Bu2018} obtained that if $\xi_1$ has the same distribution as $S$,
then $n^{-1/\alpha}T_n\stackrel{d}{\rightarrow} (c(\alpha, p))^{1/\alpha} S$, where
$
c(\alpha,p)=\int_0^{1-p} f(x)dx
$
with
\bestar
f(x)=\sum_{k=1}^{\infty}k^{\alpha} \Big(1-\Big(\frac{x}{1-p}\Big)^p\Big)^{k-1} \Big(\frac{x}{1-p}\Big)^p,
~~~~x\in [0,1-p).
\eestar 
Note that in this case,  $T_k^0=k$, and a direct calculation  shows that
\bestar
c(\alpha,p,1)=\frac{1-p}{p}\sum_{k=1}^{\infty} \k^{\alpha}\mathrm{B}(k, 1+1/p) =\int_0^{1-p} f(x)dx.
\eestar
Thus,  the constant $c(\alpha, p)$  coincides with $c(\alpha,p,1)$.
Bertoin \cite{B2021B} extended  Businger's result to the case that $\xi_1 $
follows a general stable distribution.
Bertoin \cite{B2024} further derived   corresponding results for  the negatively step-reinforced random walk, where the characteristic function of the limiting distribution is expressed in a complex form. 

%Theorem \ref{th1027} provides a unified result  for the subcritical regime and expresses  the limiting distribution in a simple form.  Furthermore, it would be interesting to investigate the limiting distribution in the critical regime, or when $\xi_1$  belongs to the domain of attraction of a general stable distribution. We conjecture that similar limiting distributions may arise,  though this remains to be established.
\end{rem}

We now outline the proof strategy for   our main result Theorem \ref{th1027}. The proof consists of three main steps:
(i) representing $T_n$ as a randomly weighted sum of the sequence $\{\xi_k\}$ by using an approach similar to \cite{B2024};
(ii) establishing normal and stable central limit theorems for general randomly weighted sums;
and (iii) proving the limit theorem for $T_n$ by applying these general results.

The remainder of this paper is organized as follows.  Section \ref{Sect2}  introduces three central limit theorems (Theorems \ref{th1}--\ref{th3}) for general randomly weighted sums. Their
proofs  are provided in Section \ref{sec3}  by using the characteristic function  method. Finally,  in
Section \ref{sect4}, we present the proof of  Theorem \ref{th1027}.

\section{Central limit theorems for randomly weighted sums}\label{Sect2}

Let $\{\xi_k , k\ge 1\}$ be a sequence of i.i.d. random variables, and let $\{W_{nk}, n\ge 1, k=1,\cdots, n\}$ be an array of random weights   independent of $\{\xi_k \}$.
Randomly weighted sums of the form 
$
\sum_{k=1}^n W_{nk}\xi_k 
$
play an important role in probability and statistics.  Below are some examples of such sums.  

The bootstrap sample  mean, 
introduced by Efron \cite{E1979},  is defined as  
$
\sum_{k=1}^n M_{nk}\xi_k ,
$
where  $(M_{n1},\cdots,  M_{nn})$ follows  a multinomial distribution with $n$ trials and success probability $1/n$ per cell, and is independent of $\{\xi_k \}$.

 A Bayesian analogue of the bootstrap was defined by
Rudin \cite{R1981}. It is constructed by letting $R_1, \cdots, R_{n-1}$ be i.i.d. uniform $(0,1)$ random variables independent of $\{\xi_k \}$. Denoting their order statistics by $R_{n1} \le \cdots \le R_{n,n-1} $ 
and setting $R_{n0}=0$ and $R_{nn}=1$,  the gaps $D_{nk}=R_{nk}-R_{n, k-1}$ for $ 1\le k\le n$ form the weights, yielding the Bayesian bootstrap mean
$\sum_{k=1}^n D_{nk}\xi_k .$ 

Another example is the self-normalized sum
 studied by  Breiman \cite{Br1965}, which is given by 
$ \sum_{k=1}^n \xi_k Y_k/\sum_{k=1}^n Y_k
$, where  we define $0/0=0$ 
and  $\{Y_k\}$ is a sequence of  i.i.d. non-negative, non-degenerate random variables independent of $\{\xi_k \}$.

A key characteristic  in all the examples above is that  the weights
$\{W_{nk},  k=1,\cdots, n\}$   are exchangeable for any fixed $n\ge 1$. 
However,  applications often involve sums with non-exchangeable weights.
The main object
of this section is to establish central limit theorems, both normal and stable, for general randomly weighted sums.
We begin with a simple result obtained by applying the Lindeberg  central limit theorem.

\begin{thm} \label{th1} Assume that $\mathbb{E}(\xi_1)=0$ and $\mbox{Var}(\xi_1)=1$, and that the weights satisfy the following conditions:
\begin{enumerate}
\item[$(A1).$]
$
\sum_{k=1}^n W_{nk}^2/n \stackrel{P}{\rightarrow} 1;
$
\item[$(A2).$]  $\max\limits_{1\le k\le n}|W_{nk}|/\sqrt{n}\stackrel{P}{\rightarrow} 0$.
\end{enumerate}
Then
$$
\frac{1}{\sqrt{n}}\sum_{k=1}^n W_{nk} \xi_k  \stackrel{d}{\longrightarrow}  N(0,1).
$$
\end{thm}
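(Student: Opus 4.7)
The natural approach is to work conditionally on the weight array $\mathcal{W}_n=(W_{n1},\dots,W_{nn})$, which is independent of $\{\xi_k\}$. Writing $\varphi(u)=\mathbb{E}(e^{iu\xi_1})$, the conditional independence of the summands yields
$$
\mathbb{E}\Bigl[\exp\Bigl(\tfrac{it}{\sqrt{n}}\sum_{k=1}^n W_{nk}\xi_k\Bigr)\,\Big|\,\mathcal{W}_n\Bigr]=\prod_{k=1}^n\varphi\Bigl(\tfrac{tW_{nk}}{\sqrt{n}}\Bigr),
$$
so by bounded convergence it suffices to show that this random product tends to $e^{-t^2/2}$ in probability for every fixed $t\in\mathbb{R}$; the unconditional characteristic function will then converge pointwise to $e^{-t^2/2}$.

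From $\mathbb{E}\xi_1=0$ and $\mathrm{Var}(\xi_1)=1$ we have the expansion $\varphi(u)=1-u^2/2+u^2\eta(u)$ with $\eta(u)\to 0$ as $u\to 0$. Condition (A2) gives $M_n:=\max_{1\le k\le n}|tW_{nk}|/\sqrt{n}\stackrel{P}{\to}0$, so with probability tending to one each argument $tW_{nk}/\sqrt{n}$ lies in a neighbourhood of $0$ on which $\varphi$ stays bounded away from zero. Restricting to this event, I would take logarithms and apply $\log(1-z)=-z+O(|z|^2)$ to get
$$
\sum_{k=1}^n\log\varphi\Bigl(\tfrac{tW_{nk}}{\sqrt{n}}\Bigr)=-\frac{t^2}{2n}\sum_{k=1}^n W_{nk}^2+R_n,
$$
where $|R_n|\le\bigl(\sup_{|u|\le M_n}|\eta(u)|+CM_n^2\bigr)\cdot t^2 n^{-1}\sum_{k=1}^n W_{nk}^2$. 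By (A1) the principal term tends to $-t^2/2$ in probability, while the remainder vanishes because its prefactor tends to $0$ by (A2) and its other factor is bounded in probability by (A1). Exponentiating yields the desired conditional limit.

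A log-free variant avoids the sign issue of $\varphi$ by applying the elementary bound $|\prod_k a_k-\prod_k b_k|\le\sum_k|a_k-b_k|$, valid for complex numbers with $|a_k|,|b_k|\le 1$, to compare $\prod_k\varphi(tW_{nk}/\sqrt{n})$ directly with the Gaussian product $\exp\bigl(-t^2(2n)^{-1}\sum_k W_{nk}^2\bigr)$. The only nontrivial point in either route is the \emph{uniform} control of the Taylor remainder across the $n$ summands, which is precisely what (A2) delivers. This is also the reason that a direct invocation of the Lindeberg--Feller theorem is slightly delicate: the Lindeberg ratio becomes a random functional of $\mathcal{W}_n$, so one must either prove it vanishes in probability (which reduces to the same calculation carried out above) or pass to almost-surely-converging subsequences of $\mathcal{W}_n$. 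I do not anticipate any deeper obstacle beyond this bookkeeping.
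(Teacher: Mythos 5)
Your proof is correct, but it follows a genuinely different route from the paper's. The paper argues by the subsequence principle: from any subsequence it extracts a further subsequence along which (A1) and (A2) hold almost surely, then, conditionally on the weights, invokes the classical Lindeberg--Feller theorem (the conditional Lindeberg ratio is controlled exactly as you indicate, via $\max_k|W_{nk}|/\sqrt{n}\rightarrow 0$ a.s.\ along the subsequence), obtains an a.s.\ conditional CLT, and de-conditions by dominated convergence; since every subsequence has such a further subsequence, the full-sequence convergence follows. You instead avoid subsequence extraction altogether: you show directly that the conditional characteristic function $\prod_{k=1}^n\varphi(tW_{nk}/\sqrt{n})$ converges in probability to $e^{-t^2/2}$, using the second-order expansion $\varphi(u)=1-u^2/2+u^2\eta(u)$, with (A2) supplying the uniform smallness of the arguments and (A1) controlling the principal term, and then conclude by bounded convergence. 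The paper's route is shorter because it offloads the analytic work to a known theorem, at the price of the subsequence/metrizability argument; your route is self-contained, needs no Lindeberg verification beyond finite variance plus uniform smallness, and (in the log-free variant via $|\prod_k a_k-\prod_k b_k|\le\sum_k|a_k-b_k|$, which is essentially the device the paper itself uses in the proofs of Theorems 2.2 and 2.3) even sidesteps the branch-of-logarithm issue. The only points to make explicit in a polished write-up are the restriction of the logarithmic expansion to the event $\{M_n\le\delta_0\}$, whose complement has vanishing probability and contributes at most that probability to the characteristic-function difference, and the observation that $\sup_{|u|\le M_n}|\eta(u)|\stackrel{P}{\rightarrow}0$ follows from $M_n\stackrel{P}{\rightarrow}0$ together with $\eta(u)\rightarrow0$ as $u\rightarrow0$; neither is a gap, just the bookkeeping you already acknowledge.
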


\begin{rem} 
Suppose that  $\mathbb{E}(\xi_i)=0$,~$\mbox{Var}(\xi_i)=1$, and $W_{n1},  \cdots, W_{nn}$ are exchangeable for any $n\ge 1$.
Mason and Newton \cite{M1992}  obtained  the following conditional central limit theorem  under conditions $(A1)$ and $(A2)$:
\be
\sup_{x\in \mathbb{R}}\Big|\mathbb{P}\Big(\frac{1}{\sqrt{n}}\sum_{k=1}^n W_{nk} \xi_k  \le x \Big| \xi_1, \xi_2 \cdots \Big)-\Phi(x)\Big|\rightarrow 0~~a.s.   \label{condCLT}
\ee
This implies $
(\sum_{k=1}^n W_{nk} \xi_k )/\sqrt{n}\stackrel{d}{\rightarrow}  N(0,1).
$
Moreover, by a slight modification of Theorem 2.1 in  \cite{A1996},  the conditional convergence result in (\ref{condCLT}) is equivalent to the unconditional convergence $
(\sum_{k=1}^n W_{nk} \xi_k )/\sqrt{n}\stackrel{d}{\rightarrow}  N(0,1)
$
when $\mbox{Var}(W_{n1})=O(1)$ and $\sum_{k=1}^n W_{nk}=c_0n$ for some fixed constant $c_0$.  However, for general weights $\{W_{nk}\}$,  the conditional central limit theorem  (\ref{condCLT}) may not hold.
\end{rem}

\begin{rem}
The condition $(A1)$ is necessary for the convergence $
(\sum_{k=1}^n W_{nk} \xi_k )/\sqrt{n} \stackrel{d}{\longrightarrow}  N(0,1)
$ to hold for any i.i.d. random variables $\xi_1, \xi_2, \cdots$ with $\mathbb{E}(\xi_1)=0$ and $\mbox{Var}(\xi_1)=1$.
Indeed, if we take $\xi_1, \xi_2, \cdots$ to be i.i.d. $N(0,1)$ random variables and assume that $
(\sum_{k=1}^n W_{nk} \xi_k )/\sqrt{n}\stackrel{d}{\longrightarrow}  N(0,1)
$, then
\bestar
\mathbb{E}\Big(\exp\Big\{it\frac{\sum_{k=1}^n W_{nk} \xi_k }{\sqrt{n}}\Big\}\Big)=\mathbb{E}\Big(\exp\Big\{-\frac{t^2}{2n}\sum_{k=1}^n W_{nk}^2\Big\}\Big)\rightarrow e^{-t^2/2},~~t\in \mathbb{R},
\eestar
which implies  $(A1)$.
\end{rem}

\begin{rem} When the weights $W_{nk}, ~n\ge 1, ~k=1,\cdots, n$ are nonrandom, in order to study the limiting distribution of $(\sum_{k=1}^n W_{nk} \xi_k )/\sqrt{n}$, it is reasonable to
assume that the sequence $\{W_{nk} \xi_k /\sqrt{n}\}$ satisfies the infinite smallness  condition. That is,  for any $\varepsilon>0$,
\bestar
\sup_{1\le k\le n} \mathbb{P} (|W_{nk}\xi_k /\sqrt{n}|\ge \varepsilon)
=\mathbb{P} (|\xi_1|\ge \varepsilon/ M_n)\rightarrow 0,   
\eestar
where $M_n=\max_{1\le k\le n}|W_{nk}|/\sqrt{n}$. If the distribution function of $\xi_i$ has unbounded support,  then the above condition holds if and only if $M_n\rightarrow 0$.
Therefore,  the condition $(A2)$ can be regarded as a version of the infinite smallness  condition.
Below  we give two examples where $(A2)$ fails and  the conclusion of Theorem \ref{th1} does not hold.
\end{rem}

\begin{exam}
For any $n\ge 1$, we set $W_{n1}=\sqrt{n}$ and $W_{nk}=0$ for $2\le k\le n$. Then $(A1)$ holds but $(\sum_{k=1}^n W_{nk} \xi_k )/\sqrt{n}=\xi_1.$
\end{exam}

\begin{exam}\label{exam1.2}
Let $Y_1, Y_2, \cdots$ be i.i.d. non-negative random variables such that $\mathbb{P}(Y_1>x)$ is  slowly varying at $+\infty$. By Theorem 1.2.1 in   \cite{dH1970},  this   is equivalent to
\bestar
\lim_{x\rightarrow \infty}\frac{x^2\mathbb{P}(Y_1>x)}{\mathbb{E} Y_1^2I(Y_1\le x)}
=\infty.~%~~~\mbox{and}~~~~\limsup_{x\rightarrow \infty} \frac{x\mathbb{E}(Y_1I(Y_1\le x))}{x^2\mathbb{P}(Y_1>x)+\mathbb{E}(Y_1^2I(Y_1\le x))}<\infty.
\eestar
Define $W_{nk}=\sqrt{n}Y_k/\sum_{i=1}^n Y_i$ for $n\ge 1$ and $1\le k\le n$, %(where we define $0/0=0$),
then,  for each $n\ge 1$,  the weights $\{W_{nk}, k=1,\cdots, n\}$ are exchangeable. Moreover, the condition $(A1)$ holds (see Theorem 1.2 in \cite{CG2004}):
$$\frac{1}{n}\sum_{k=1}^n W_{nk}^2=\frac{\sum_{k=1}^n Y_k^2}{(\sum_{k=1}^n Y_k )^2}\stackrel{P}{\rightarrow} 1.$$
 If $\xi_1, \xi_2,\cdots$ are i.i.d.random variables with $\mathbb{E}|\xi_i|<\infty$, then it follows from Corollary 1.11 in  \cite{KM2012}  that
$
(\sum_{k=1}^n W_{nk} \xi_k )/\sqrt{n}\stackrel{d}{\rightarrow} \xi_1.
$
In this case, we have $\max_{1\le k\le n}|W_{nk}|/\sqrt{n}\stackrel{P}{\rightarrow} 1$ by Theorem 3.2 in Darling \cite{D1952}.
\end{exam}

%\begin{rem} \label{rem2.4}
If  $\xi_1$ belongs to the domain of attraction of a normal distribution  with $\mathbb{E}(\xi_1)=0$, then there exists a sequence of positive constants $\{a_n\}$ such that
\be
\frac{1}{a_n}\sum_{k=1}^n \xi_k \stackrel{d}{\longrightarrow} N(0,1). \label{DAN}
\ee
In view of Theorem \ref{th1}, it is natural to investigate whether a corresponding  central limit theorem holds for the randomly weighted sum:
\be
\frac{1}{a_n}\sum_{k=1}^n W_{nk} \xi_k  \stackrel{d}{\longrightarrow} N(0,1). \label{DAN1}
\ee
In this setting, the conditions $(A1)$ and $(A2)$ are not sufficient to guarantee the convergence in  (\ref{DAN1}).

\begin{exam} Let $\xi_i$ be  symmetric with
\bestar
P(|\xi_i|>x)=\left\{
\begin{array}{cc}
x^{-2}, & x\ge 1,\\
1, & x\le 1.
\end{array}
\right.
\eestar
Then (\ref{DAN}) holds with $a_n=\sqrt{n\log n}$ (see Example 3.4.13 in  \cite{D2019}). Let
$\{m(n), n=1,2,\cdots \}\subset \mathbb{N}$ be a sequence  satisfying $m(n)\rightarrow \infty$ and $\log m(n)=o(\log n)$.
For any $n\ge 1$, define 
\bestar
W_{nk}=\left\{
\begin{array}{ll}
\sqrt{n/m(n)},  & 1\le k\le  m(n), \\
0, & m(n)< k\le n. 
\end{array}
\right.
\eestar
Then the conditions $(A1)$ and $(A2)$ are satisfied,  but
\bestar
\frac{\sqrt{\log n}}{\sqrt{\log m(n)}}\frac{\sum_{k=1}^n W_{nk} \xi_k }{a_n}\stackrel{d}{\rightarrow} N(0,1),
\eestar
which implies that (\ref{DAN1}) doesn't hold. 
\end{exam}

In Theorem \ref{th2} below, we establish  a more general result than (\ref{DAN1}).

\begin{thm} \label{th2}
  Suppose that
\begin{enumerate}
\item[$(A3).$]
$
\frac{1}{n}\sum_{k=1}^n W_{nk}^2 \stackrel{d}{\rightarrow} W,
$ where $W$ is some non-negative random variable;
%\item[$(ii).$]  $\sup_n \frac{1}{n}\sum_{k=1}^n \mathbb{E}(W_{nk}^2)<\infty$,
\item[$(A4).$]
$
\lim\limits_{c\rightarrow \infty} \sup\limits_n \frac{1}{n}\sum_{k=1}^n \mathbb{E} (W_{nk}^{2} I(|W_{nk}|>c))=0.
$
\end{enumerate}
If 
$
\sum_{k=1}^n \xi_k  /{a_n}\stackrel{d}{\rightarrow} N(0,1),
$
then
\bestar
\frac{1}{a_n}\sum_{k=1}^n W_{nk} \xi_k  \stackrel{d}{\longrightarrow} \sqrt{W} N,
\eestar
where $N$ is a standard normal random  variable independent of $W$.
\end{thm}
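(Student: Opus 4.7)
The plan is to prove convergence of the characteristic functions. Since $\{W_{nk}\}$ is independent of $\{\xi_k\}$, conditioning on the weights yields
\[
\varphi_n(t) := \mathbb{E}\exp\!\left(\frac{it}{a_n}\sum_{k=1}^n W_{nk}\xi_k\right) = \mathbb{E}\prod_{k=1}^n \phi_\xi\!\left(\frac{tW_{nk}}{a_n}\right),
\]
where $\phi_\xi$ is the characteristic function of $\xi_1$. The characteristic function of $\sqrt{W}N$ equals $\mathbb{E}\exp(-t^2W/2)$, so it suffices to show $\varphi_n(t)\to\mathbb{E}\exp(-t^2W/2)$ for every fixed $t\in\mathbb{R}$.

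The hypothesis $a_n^{-1}\sum_k\xi_k\stackrel{d}{\to}N(0,1)$ is equivalent to $n(1-\phi_\xi(s/a_n))\to s^2/2$ for every $s\in\mathbb{R}$, and standard domain-of-attraction theory provides the quantitative bound $|1-\phi_\xi(s)|\le Cs^2 U(1/|s|)$ with $U(x):=\mathbb{E}(\xi_1^2 I(|\xi_1|\le x))$ slowly varying at infinity. Define $\psi_n(w):=n(1-\phi_\xi(tw/a_n))$, so that $\psi_n(w)\to t^2w^2/2$ pointwise, and by the uniform convergence theorem for slowly varying functions this limit holds uniformly on compact subsets of $\mathbb{R}$. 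Crucially, since $U$ is non-decreasing and $nU(a_n)/a_n^2\to 1$, one obtains the one-sided tail bound $\psi_n(w)\le Ct^2w^2$ uniformly for all $|w|\ge 1/|t|$ and $n$ large enough.

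The main step is to establish
\[
\sum_{k=1}^n \bigl(1-\phi_\xi(tW_{nk}/a_n)\bigr) = \frac{1}{n}\sum_{k=1}^n \psi_n(W_{nk}) \stackrel{d}{\longrightarrow} \frac{t^2 W}{2}.
\]
I would do this by truncation at a level $K>1/|t|$. For the contribution from $\{|W_{nk}|\le K\}$, uniform convergence of $\psi_n$ on $[-K,K]$ together with (A3) shows that this partial sum is close in probability to $(t^2/2)\cdot n^{-1}\sum_k W_{nk}^2 I(|W_{nk}|\le K)$, which converges in distribution to $(t^2/2)W$ as $n\to\infty$ then $K\to\infty$, using (A3) and the $L^1$-smallness of the tail from (A4). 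For the contribution from $\{|W_{nk}|>K\}$, the one-sided bound above gives
\[
\mathbb{E}\Bigl[n^{-1}\sum_{k=1}^n \psi_n(W_{nk})I(|W_{nk}|>K)\Bigr] \le Ct^2\cdot n^{-1}\sum_{k=1}^n \mathbb{E}\bigl(W_{nk}^2 I(|W_{nk}|>K)\bigr),
\]
which vanishes as $K\to\infty$ uniformly in $n$ by (A4). Combining the two pieces delivers the displayed convergence.

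To pass from the sum back to the logarithm of the product one needs the infinitesimality condition $\max_k|\phi_\xi(tW_{nk}/a_n)-1|\stackrel{P}{\to}0$; this follows from (A4), which via a Chebyshev-type estimate implies $\max_k W_{nk}^2/n\stackrel{P}{\to}0$. Then $\log\prod_k\phi_\xi(tW_{nk}/a_n)=\sum_k(\phi_\xi(tW_{nk}/a_n)-1)+o_P(1)$, and the previous step (together with a routine check that the imaginary part $\sum_k\int\sin(tW_{nk}x/a_n)dF(x)$ is negligible) gives $\prod_k\phi_\xi(tW_{nk}/a_n)\stackrel{d}{\to}e^{-t^2W/2}$. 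Since the product is bounded by $1$ in modulus, bounded convergence finally yields $\varphi_n(t)\to\mathbb{E}e^{-t^2W/2}$, completing the proof. The main obstacle is the third paragraph: without the monotonicity of the truncated second moment $U$, one would only obtain $\psi_n(w)\le Ct^2|w|^{2+\varepsilon}$ via Potter's theorem, which would require an $L^{2+\varepsilon}$ condition on the weights rather than the weaker uniform integrability (A4). Exploiting that $U$ is non-decreasing is what makes (A4) exactly the right hypothesis for the tail.
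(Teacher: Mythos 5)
Your plan is sound and it reaches the stated conclusion, but it takes a genuinely different route from the paper's proof of this theorem. The paper truncates the steps at level $a_n$, centers them ($\xi_{nk}=(\xi_k I(|\xi_k|\le a_n)-\mathbb{E}\xi_1I(|\xi_1|\le a_n))/a_n$), and introduces an auxiliary i.i.d.\ Gaussian array $Z_{nk}$ with variance $\sigma_{n1}^2=\mathrm{Var}(\xi_{n1})\sim 1/n$; conditionally on the weights it swaps $\xi_{nk}$ for $Z_{nk}$ one index at a time in the characteristic function, bounding each swap by $|e^{ix}-1-ix+x^2/2|\le \min\{x^2,|x|^3/6\}$, and the resulting error $\sum_k\mathbb{E}((W_{nk}^2\xi_{nk}^2)\wedge|W_{nk}\xi_{nk}|^3)$ is killed by the same dichotomy you use (weights above a level $c$ via (A4), the rest via the smallness of $\xi_{n1}$); the limit is then read off from the explicit conditional Gaussian characteristic function $\exp\{-t^2\sigma_{n1}^2\sum_kW_{nk}^2/2\}$ together with (A3). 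You instead keep $\xi_k$ untruncated, use the triangular-array fact $n(1-\phi_\xi(s/a_n))\to s^2/2$ uniformly on compacts plus the quantitative estimate $|1-\phi_\xi(s)|\le Cs^2U(1/|s|)$ with $U$ monotone, and truncate the weights at a fixed level $K$. This is essentially the architecture the paper uses for the stable case (Theorem 2.3), so your argument treats $\alpha=2$ by the method of Theorem 2.3, with the monotone truncated second moment $U(x)=\mathbb{E}(\xi_1^2I(|\xi_1|\le x))$ playing the role that $\mathbb{E}(|\xi_{n1}|^\beta\wedge 1)$ plays there. What the Gaussian-replacement proof buys is that it never needs a pointwise bound on $1-\phi_\xi$; what your route buys is a unified scheme for Theorems 2.2 and 2.3 and a transparent explanation of why the uniform square-integrability (A4), rather than a $2+\delta$ moment on the weights, is exactly the right hypothesis.

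Three points should be tightened, none of which damages the approach. First, the inequality $|1-\phi_\xi(s)|\le Cs^2U(1/|s|)$ only holds for $|s|$ below a fixed threshold (it can fail for large $|s|$, e.g.\ if $|\xi_1|\ge 1$ a.s., so that $U(1/|s|)=0$); since $W_{nk}$ can exceed any fixed multiple of $a_n$ with positive probability, for the indices with $|t W_{nk}|/a_n$ above that threshold you should use the trivial bound $|1-\phi_\xi|\le 2$ together with $n=O(a_n^2)$ (which follows from $nU(a_n)/a_n^2\to 1$ and $U$ eventually bounded below); this still gives $\psi_n(w)\le C_t w^2$ on $\{|w|\ge 1/|t|\}$, so the fix is one line. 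Second, your control of the imaginary part of $1-\phi_\xi$ uses $\mathbb{E}\xi_1=0$, which does follow from the hypothesis (no centering appears in $\sum_k\xi_k/a_n\stackrel{d}{\to}N(0,1)$), but it should be stated, and the main displayed convergence should be phrased for the modulus or for real and imaginary parts separately, since $\psi_n$ is complex. Third, at the end, passing from $\prod_k\phi_\xi(tW_{nk}/a_n)\stackrel{d}{\to}e^{-t^2W/2}$ to convergence of the expectations should invoke boundedness in modulus by $1$ (note $\mathrm{Re}(1-\phi_\xi)\ge 0$), as you indicate; with these details filled in, the proof is complete.
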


\begin{rem} \label{rem1.5}
The condition $(A4)$ holds if there exists some $\delta>0$ such that
\bestar
\sup \limits_n \frac{1}{n}\sum_{k=1}^n \mathbb{E} (|W_{nk}|^{2+\delta}) <\infty.
\eestar
 More generally,  $(A4)$ is also satisfied if there exists a positive, non-decreasing function $h(x)$ on $\mathbb{R}^+$ such that $h(x)\rightarrow \infty$ as $x\rightarrow \infty$ and $\sup_n \sum_{k=1}^n \mathbb{E}(W_{nk}^2h(|W_{nk}|))/n <\infty$.
The condition $(A4)$   implies $(A2)$ since for any $\varepsilon>0$,
\bestar
\mathbb{P}\Big(\max\limits_{1\le k\le n}|W_{nk}|/\sqrt{n}\ge \varepsilon \Big) \le \sum_{k=1}^n\mathbb{P} (|W_{nk}|\ge \sqrt{n}\varepsilon)
\le\frac{1}{n\varepsilon^2}\sum_{k=1}^n \mathbb{E} (W_{nk}^{2} I(|W_{nk}|>\sqrt{n}\varepsilon)).
\eestar
\end{rem}

Similar to Theorem \ref{th2}, we can also establish the following stable central limit theorem for randomly weighted sums.

\begin{thm} \label{th3}
Assume that
\bestar
\sum_{k=1}^n \xi_k  /{a_n}\stackrel{d}{\rightarrow} S,
\eestar
where $S$ is a stable random variable 
with characteristic function $\mathbb{E}(e^{it S})=e^{-|t|^{\alpha}}$ and $\alpha\in (0,2)$.
Suppose further that
\begin{enumerate}
\item[$(A5).$]
$
\frac{1}{n}\sum_{k=1}^n |W_{nk}|^{\alpha} \stackrel{d}{\rightarrow} F,
$ where $F$ is some distribution function;
 \item[$(A6).$]   There exists $\beta\ge 1$ satisfying $\beta>\alpha$ and  $
\lim\limits_{c\rightarrow \infty} \sup\limits_n \frac{1}{n}\sum_{k=1}^n \mathbb{E} (|W_{nk}|^{\beta} I(|W_{nk}|>c))=0.
$
\end{enumerate}
Then we have
\bestar
\frac{1}{a_n}\sum_{k=1}^n W_{nk} \xi_k \stackrel{d}{\longrightarrow} W^{1/\alpha}  S,
\eestar
where $W$ is a non-negative random variable with distribution function $F$, independent of $S$.
\end{thm}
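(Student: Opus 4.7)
The plan is to employ the characteristic function method, in the spirit of what has been announced for Theorems \ref{th1} and \ref{th2}. Set $\phi(s):=\mathbb{E}[e^{is\xi_1}]$; by the independence of $\{W_{nk}\}$ and $\{\xi_k\}$,
\[
\mathbb{E}\Bigl[\exp\Bigl(\frac{it}{a_n}\sum_{k=1}^n W_{nk}\xi_k\Bigr)\Bigr]=\mathbb{E}\Bigl[\prod_{k=1}^n\phi(tW_{nk}/a_n)\Bigr],
\]
while the target $W^{1/\alpha}S$ has characteristic function $\mathbb{E}[\exp(-|t|^\alpha W)]$. Since the random product inside the expectation has modulus at most one, the bounded convergence theorem reduces the problem to the distributional convergence $\prod_{k=1}^n\phi(tW_{nk}/a_n)\stackrel{d}{\rightarrow}\exp(-|t|^\alpha W)$; combined with (A5) and Slutsky's theorem, this in turn reduces to proving
\[
R_n(t):=\sum_{k=1}^n\log\phi(tW_{nk}/a_n)+\frac{|t|^\alpha}{n}\sum_{k=1}^n|W_{nk}|^\alpha\stackrel{P}{\rightarrow}0.
\]

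The hypothesis $a_n^{-1}\sum\xi_k\stackrel{d}{\rightarrow}S$ places $\xi_1$ in the domain of attraction of the symmetric $\alpha$-stable law, so $1-\phi(s)=|s|^\alpha L(1/|s|)$ for a function $L$ slowly varying at infinity, and $n(1-\phi(t/a_n))\to|t|^\alpha$ for every fixed $t$. Using $\log(1-u)=-u+O(u^2)$, I split $R_n(t)$ into a principal piece $\frac{|t|^\alpha}{n}\sum_k|W_{nk}|^\alpha-\sum_k(1-\phi(tW_{nk}/a_n))$ and a quadratic remainder $O\bigl(\sum_k|1-\phi(tW_{nk}/a_n)|^2\bigr)$. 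To control the principal piece I fix $0<\delta<c$ and split the $k$-sum according to whether $|W_{nk}|\in[\delta,c]$, $|W_{nk}|>c$, or $|W_{nk}|<\delta$. On the annulus Karamata's uniform convergence theorem for $L$ delivers $n(1-\phi(tW_{nk}/a_n))\to|tW_{nk}|^\alpha$ uniformly, so the annular part vanishes in probability after dividing by $n$ and invoking (A5). On $\{|W_{nk}|>c\}$, assumption (A6) with $\beta>\alpha$ (via $|W|^\alpha\le c^{\alpha-\beta}|W|^\beta$) makes both $\frac{|t|^\alpha}{n}\sum|W_{nk}|^\alpha$ and $\sum(1-\phi(tW_{nk}/a_n))$ uniformly small in $n$ as $c\to\infty$. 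On $\{|W_{nk}|<\delta\}$, Potter's bound yields $1-\phi(tW_{nk}/a_n)\le C|t|^{\alpha-\epsilon}\delta^{\alpha-\epsilon}/n$ uniformly, so summing gives $O(\delta^{\alpha-\epsilon})$, which is small as $\delta\to 0$.

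The main technical obstacle is precisely this sum-level uniformity, since the pointwise convergence of the slowly varying factor $L$ is not sufficient and must be patched at both ends of the weight scale---which is the role of the uniform integrability in (A6) at the tail and Potter's inequality near zero. The quadratic remainder is dispatched by the same truncation: for $|W_{nk}|\le c$, $\max_k(1-\phi(tW_{nk}/a_n))\to 0$ and $\sum_k(1-\phi(\cdot))$ is stochastically bounded, so the sum of squares is $o_P(1)$; for $|W_{nk}|>c$, the inequality $|1-\phi|^2\le 2|1-\phi|$ reduces the estimate to the case already treated. Once $R_n(t)\stackrel{P}{\rightarrow}0$ is established, L\'evy's continuity theorem together with the initial bounded-convergence reduction yields $a_n^{-1}\sum W_{nk}\xi_k\stackrel{d}{\rightarrow}W^{1/\alpha}S$, completing the proof.
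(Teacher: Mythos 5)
Your overall strategy (work with the characteristic function of the weighted sum conditionally on the weights and show $\sum_k\log\phi(tW_{nk}/a_n)+\tfrac{|t|^{\alpha}}{n}\sum_k|W_{nk}|^{\alpha}\stackrel{P}{\to}0$, then conclude via (A5) and bounded convergence) is a genuinely different route from the paper, which never takes logarithms: it introduces an auxiliary i.i.d.\ stable array $S_{nk}=n^{-1/\alpha}S_k$ independent of the weights, notes $\sum_k W_{nk}S_{nk}\stackrel{d}{=}(n^{-1}\sum_k|W_{nk}|^{\alpha})^{1/\alpha}S$, and bounds the difference of the two conditional characteristic functions termwise, using the L\'evy--Khintchine representation together with $n\,\mathbb{P}\circ\xi_{n1}^{-1}\stackrel{v}{\to}\nu$, $n\mathbb{E}(\xi_{n1}I(|\xi_{n1}|\le1))\to\tilde\gamma$, $n\mathbb{E}(|\xi_{n1}|^{\beta}\wedge1)\to\int(|x|^{\beta}\wedge1)\,d\nu$; the region $|W_{nk}|>M$ is handled by (A6) and the compact region by the uniform convergence $n(\mathbb{E}e^{ita\xi_{n1}}-1)\to-|ta|^{\alpha}$ on bounded $a$-ranges.

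However, your argument has a genuine gap at its foundation: the identity $1-\phi(s)=|s|^{\alpha}L(1/|s|)$ with $L$ real and slowly varying is not a consequence of the hypotheses. The theorem only assumes that the \emph{limit} $S$ is symmetric; $\xi_1$ itself may be asymmetric (balanced regularly varying tails, mean zero when $\alpha>1$), in which case $1-\phi(s)$ is complex and only $1-\mathrm{Re}\,\phi(s)$ is regularly varying. All of your subsequent machinery (Karamata uniform convergence on the annulus, Potter bounds near zero and on $\{|W_{nk}|>c\}$) is applied to $1-\phi$ as if it were a real regularly varying function, so the imaginary (centering) contribution $\sum_k\mathrm{Im}\log\phi(tW_{nk}/a_n)$ is never controlled. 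On a compact range of weights it can be absorbed by the uniform convergence of $n(\mathbb{E}e^{ita\xi_{n1}}-1)$, but at the two extremes of the weight scale --- exactly where you invoke Potter and (A6) --- there is no regular-variation bound for $\mathrm{Im}(1-\phi)$, and for $\alpha=1$ this is the classical Cauchy centering difficulty; the hypothesis $n\,\mathrm{Im}\,\phi(t/a_n)\to0$ fixes the scale $s\asymp1/a_n$ only and does not transfer to $s=tW_{nk}/a_n$ with $W_{nk}\to0$ or $W_{nk}\to\infty$. (The paper avoids this entirely because the linear term $it\tilde\gamma W_{nk}/n$ appears identically for the $\xi$-array and the $S$-array and cancels in the comparison.) Two further points need attention even in the symmetric case: the termwise logarithm requires $|1-\phi(tW_{nk}/a_n)|$ to be uniformly small, whereas some $|W_{nk}|$ may be of order $a_n$ --- this is repairable, since (A6) with $\beta>\alpha$ gives $\sum_k\mathbb{P}(|W_{nk}|>\eta a_n)\le Cn(\eta a_n)^{-\beta}\to0$, hence $\max_k|W_{nk}|=o_P(a_n)$, but it must be said; and your claim that (A6) alone makes $\sum_{|W_{nk}|>c}(1-\phi(tW_{nk}/a_n))$ small needs a global majorant such as $1-\mathrm{Re}\,\phi(s)\le C\,\mathbb{E}(|s\xi_1|^{2}\wedge1)$ together with a Potter exponent $\alpha+\epsilon\le\beta$, again available only for the real part. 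As written, the proof is complete only under the additional assumption that $\xi_1$ is symmetric.
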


\section{Proofs of Theorems \ref{th1}-\ref{th3}}\label{sec3}

\begin{proof}[Proof of Theorem \ref{th1}]
For any sequence $\{n_m, ~m=1,2,\cdots \}\subset \mathbb{N}$, by $(A1)$ and $(A2)$ there exists a subsequence $\{n_{m_l}, ~l=1,2,\cdots \}$ so that as $l\rightarrow \infty$,
\bestar
\frac{1}{n_{m_l}}\sum_{k=1}^{n_{m_l}} W_{n_{m_l},k}^2 \rightarrow 1~~a.s.~~~~\mbox{and}~~~~\max\limits_{1\le k\le n_{m_l}}|W_{n_{m_l},k}|/\sqrt{n_{m_l}}\rightarrow 0 ~~a.s.
\eestar
Throughout this section, let $\mathbb{P}_*$ and $\mathbb{E}_*$ denote the conditional probability and  expectation, respectively, given $\{W_{nk}, ~n\ge 1, ~k=1,\cdots, n\}$.
Under $\mathbb{P}_*$,  the array $\{W_{n_{m_l},k}\xi_k /\sqrt{n_{m_l}}, ~k=1,\cdots, n_{m_l}, ~l=1,2,\cdots\}$ satisfies 
the conditions of the classical Lindeberg-Feller theorem (see Theorem 3.4.10 in  \cite{D2019}) almost surely.  Hence
 \bestar
\sup_{x\in \mathbb{R}}\Big| \mathbb{P}_* \Big(\frac{1}{\sqrt{n_{m_l}}}\sum_{k=1}^{n_{m_l}} W_{n_{m_l},k} \xi_k \le x \Big)- \Phi(x)\Big|\rightarrow 0~~\mbox{a.s.~as}~l\rightarrow \infty.
 \eestar
This implies that
 \bestar
\sup_{x\in \mathbb{R}}\Big| \mathbb{P} \Big(\frac{1}{\sqrt{n_{m_l}}}\sum_{k=1}^{n_{m_l}} W_{n_{m_l},k} \xi_k \le x \Big)-\Phi(x)\Big|\rightarrow 0~\mbox{as}~l\rightarrow \infty
 \eestar
and the desired result follows.
\end{proof}

\begin{proof}[Proof of Theorem \ref{th2}]  Define
\bestar
L(x)=\mathbb{E} (\xi_1^2I(|\xi_1|\le x)),~~x\in \mathbb{R}.
\eestar
Then $L(x)$ is a slowly varying function. Moreover (see, e.g., Section XVII.5 of \cite{F1971}),  %$nL(a_n) \sim a_n^2$ and
for any $\varepsilon>0$, we have
\be
nP(|\xi_1|>\varepsilon a_n)\rightarrow 0,~~~~ \frac{n}{a_n}\mathbb{E}(\xi_1I(|\xi_1|\le \varepsilon a_n))\rightarrow 0, ~~~~\frac{nL(\varepsilon a_n) }{a_n^2} \rightarrow 1.
\label{limitl}
\ee
%Hence we can choose $\{\varepsilon_n\}$ such that $\varepsilon_n\rightarrow 0$ and
%\bestar
%nP(|\xi_1|>\varepsilon_n a_n)\rightarrow 0,~~~~ \frac{n}{a_n}\mathbb{E}(\xi_1I(|\xi_1|\le \varepsilon_n a_n))\rightarrow 0, ~~\frac{n}{a_n^2} \mathbb{E}(\xi_1^2I(|\xi_1|\le\varepsilon_n a_n))\rightarrow 1.
%\eestar
Let
\bestar
\xi_{nk}=\frac{\xi_k I(|\xi_k |\le  a_n)-\mathbb{E}(\xi_1I(|\xi_1|\le a_n))}{a_n},~~~~~~n\ge 1, ~k=1,\cdots, n,
\eestar
and let $Z_{n1}, \cdots, Z_{nn}$ be i.i.d. $N(0,\sigma_{n1}^2)$ random variables independent of $\{W_{nk}\}$, where
\be
\sigma_{n1}^2=\mbox{Var}(\xi_{n1})=\frac{L(a_n)}{a_n^2}-\frac{(\mathbb{E}(\xi_1I(|\xi_1|\le a_n)))^2}{a_n^2}\sim \frac{1}{n}.  \label{limit2}
\ee

For any $\varepsilon\in (0,1)$, it follows from (\ref{limitl}) that for sufficiently large $n$,
\be
n\mathbb{E} (\xi_{n1}^2I(|\xi_{n1}|> \varepsilon)) &\le&  n\mathbb{E} (\xi_{n1}^2I(|\xi_1|> \varepsilon a_n/2))\nonumber\\
 &\le&  \frac{2n}{a_n^2}\mathbb{E} (\xi_{1}^2I(a_n\ge |\xi_1|>\varepsilon a_n/2))+\frac{2n}{a_n^2}(\mathbb{E}(\xi_1I(|\xi_1|\le a_n)))^2\nonumber\\
 &=&\frac{2nL(\varepsilon a_n/2)}{a_n^2}-\frac{2nL(a_n)}{a_n^2}+o(n^{-1})\rightarrow 0.    \label{limit3}
\ee
%and hence
%\be
%n\mathbb{E} (\xi_{n1}^2I(|\xi_{n1}|\le \varepsilon))=n\sigma_{n1}^2- n\mathbb{E} (\xi_{n1}^2I(|\xi_{n1}|> \varepsilon)) \rightarrow 1. \label{limit4}
%\ee

Applying the inequality $|e^{ix}-1-ix+x^2/2|\le \min\{x^2, |x|^3/6\}$ for $x\in \mathbb{R}$ (see for instance Lemma 3.3.19 in  \cite{D2019}) gives
\bestar
&&\Big|\mathbb{E}_* (\exp\{it W_{nk} \xi_{nk}\})-1+\frac{t^2}{2}W_{nk}^2 \sigma_{n1}^2\Big|\\
&\le &
\mathbb{E}_* \Big|\exp\{it W_{nk} \xi_{nk}\}-1-itW_{nk}\xi_{nk}+\frac{t^2}{2}W_{nk}^2 \xi_{nk}^2\Big|\\
&\le& C_t \mathbb{E}_* ((W_{nk}^2 \xi_{nk}^2) \wedge (|W_{nk}\xi_{nk}|^3)),
\eestar
where $C_t$ is a positive constant only depending on $t$, and we write  $a\wedge b:=\min\{a,b\}$ for any $a,b\in \mathbb{R}$. Similarly,
\bestar
\Big|\mathbb{E}_* (\exp \{it W_{nk} Z_{nk} \} )-1+\frac{t^2}{2}W_{nk}^2 \sigma_{n1}^2\Big|
\le  C_t \mathbb{E}_* ((W_{nk}^2 Z_{nk}^2) \wedge (|W_{nk}Z_{nk}|^3)).
\eestar
Hence,
\be
&&\Big|\mathbb{E}\Big( \exp\Big\{it \sum_{k=1}^nW_{nk} \xi_{nk}\Big\}\Big)-\mathbb{E}\Big(\exp\Big\{it \sum_{k=1}^n W_{nk} Z_{nk}\Big\}\Big)\Big|\nonumber\\
&\le&\mathbb{E}\Big|\mathbb{E}_*\Big( \exp\Big\{it \sum_{k=1}^nW_{nk} \xi_{nk}\Big\}\Big)-\mathbb{E}_* \Big(\exp\Big\{it \sum_{k=1}^n W_{nk} Z_{nk}\Big\}\Big)\Big|\nonumber\\
&=&\mathbb{E} \Big|\prod_{k=1}^n \mathbb{E}_*  (\exp \{it W_{nk} \xi_{nk} \} )-\prod_{k=1}^n\mathbb{E}_*  (\exp \{it W_{nk} Z_{nk} \} )\Big|\nonumber\\
&\le& \sum_{k=1}^n \mathbb{E} \Big|\mathbb{E}_* ( \exp \{it W_{nk} \xi_{nk} \})-\mathbb{E}_*  (\exp \{it W_{nk} Z_{nk} \} )\Big|\nonumber\\
&\le& C_t\sum_{k=1}^n  \mathbb{E}((W_{nk}^2 \xi_{nk}^2) \wedge (|W_{nk}\xi_{nk}|^3))+ C_t\sum_{k=1}^n  \mathbb{E}((W_{nk}^2 Z_{nk}^2) \wedge (|W_{nk}Z_{nk}|^3)).  \label{diff}
\ee
By applying (\ref{limit2}) and (\ref{limit3}), we have
\bestar
&&\sum_{k=1}^n   \mathbb{E} (W_{nk}^2 \xi_{nk}^2I(|W_{nk}\xi_{nk}|>\varepsilon))\\
&\le & \sum_{k=1}^n  \mathbb{E} (W_{nk}^2 \xi_{nk}^2 I(|W_{nk}|>c) )+\sum_{k=1}^n   \mathbb{E} (W_{nk}^2 \xi_{nk}^2I(|\xi_{nk}|>\varepsilon/c))\\
&\le & n \sigma_{n1}^2 \sup_m \frac{\sum_{k=1}^m  \mathbb{E} (W_{mk}^2 I(|W_{mk}|>c))}{m} + \mathbb{E}(\xi_{n1}^2I(|\xi_{n1}|>\varepsilon/c)) \sum_{k=1}^n   \mathbb{E} (W_{nk}^2) \rightarrow 0
\eestar
as $n\rightarrow \infty$ and then $c\rightarrow \infty$, and
\bestar
\sum_{k=1}^n   \mathbb{E} (|W_{nk}\xi_{nk}|^3 I(|W_{nk}\xi_{nk}|\le \varepsilon))
\le  \varepsilon \sum_{k=1}^n   \mathbb{E} (W_{nk}^2 \xi_{nk}^2)
=\varepsilon \sigma_{n1}^2\sum_{k=1}^n  \mathbb{E} (W_{nk}^2) \rightarrow 0
\eestar
as $n\rightarrow \infty$ and then $\varepsilon\rightarrow 0$, where we have used the fact that $\sup_n \sum_{k=1}^n  \mathbb{E} (W_{nk}^2) /n<\infty$ by $(A4)$.
Hence
\be
&&\sum_{k=1}^n   \mathbb{E} ((W_{nk}^2\xi_{nk}^{2}) \wedge (|W_{nk}\xi_{nk}|^{3}))\nonumber\\
&\le & \sum_{k=1}^n   \mathbb{E} (W_{nk}^2 \xi_{nk}^2I(|W_{nk}\xi_{nk}|>\varepsilon))+ \sum_{k=1}^n   \mathbb{E} (|W_{nk}\xi_{nk}|^3 I(|W_{nk}\xi_{nk}|\le \varepsilon))\rightarrow 0~~~~~~
\label{E23WX}
\ee
as $n\rightarrow \infty$ and then $\varepsilon\rightarrow 0$.
Similarly,
\be
\sum_{k=1}^n   \mathbb{E} ((W_{nk}^2Z_{nk}^{2}) \wedge (|W_{nk}Z_{nk}|^{3}))\rightarrow 0.  \label{E23WZ}
\ee
Note that (by $(A3)$)
\bestar
\mathbb{E}\Big(\exp\Big\{it \sum_{k=1}^n W_{nk} Z_{nk}\Big\}\Big)
&=&\mathbb{E}\Big(\mathbb{E}_*\Big(\exp\Big\{it \sum_{k=1}^n W_{nk} Z_{nk}\Big\}\Big)\Big)\\
&=&\mathbb{E}\Big(\exp\Big\{-\frac{t^2\sigma_{n1}^2}{2}\sum_{k=1}^n W_{nk}^2\Big\} \Big) \\
&\rightarrow& \mathbb{E}(e^{-t^2W/2})=\mathbb{E}(e^{it\sqrt{W}N}).
\eestar
We have
$ \sum_{k=1}^n W_{nk} Z_{nk} \stackrel{d}{\rightarrow} \sqrt{W} N$.
This implies Theorem \ref{th2} by combining (\ref{diff})--(\ref{E23WZ}).
\end{proof}

\begin{proof} [Proof of Theorem \ref{th3}]
Without loss of generality, we may assume that $  \beta\in [1, 2]$ in (A6).
For $\alpha\in (0,2)$,  we can rewrite 
the characteristic exponent of  $\mathbb{E}(e^{itS})=e^{-|t|^{\alpha}}$
as (see for instance \cite{Br1968}, p. 204-206)  
\be
-|t|^{\alpha} =i\tilde{\gamma} t+\int_{-\infty}^{\infty} f(t, x) \nu(dx),
\ee
where $\tilde{\gamma}\in \mathbb{R}$,~ $\nu$ is the L\'{e}vy measure with $\nu(\{0\})=0$, $\nu(dx)=\tilde{c} |x|^{-\alpha-1}dx$ on $\mathbb{R}-\{0\}$,
 $\tilde{c}>0$, and
\be
f(t, x)=e^{itx}-1-itxI(|x|\le 1). \label{ft}
\ee

Define $\xi_{nk}=\xi_k /a_n$ for $1\le k \le n$,
 then $\xi_{n1},\cdots, \xi_{nn}$ are i.i.d. for any $n\ge 1$ and (see Corollary 15.16 in \cite{K2005}) 
\be
&& n \mathbb{P}\circ \xi_{n1}^{-1} \stackrel{v}{\rightarrow} \nu, ~~~~~
n\mathbb{E}(\xi_{n1}I(|\xi_{n1}|\le 1)) \rightarrow  \tilde{\gamma},  \label{stable3}\\
&& n \mathbb{E}(|\xi_{n1}|^{\beta}\wedge 1)\rightarrow \int (|x|^{\beta}\wedge 1)d\nu(x)<\infty,  \label{stable4}
\ee
where $\stackrel{v}{\rightarrow}$ denotes vague convergence.

Suppose that $S, S_1, S_2,\cdots$ are i.i.d. random variables independent of $\{W_{nk}\}$. Write $S_{nk}:=n^{-1/\alpha} S_k $
for $1\le k\le n$.
%\bestar
%S_{nk}=S_k-b_n,~~~~1\le k\le n
%\eestar
%with
%\bestar
%b_n=\left\{
%\begin{array}{cc}
%\gamma n^{1/\alpha-1}-\gamma, & \mbox{if} ~\alpha\ne 1,\\
%-(2b\kappa/\pi)\log n, &  \mbox{if} ~\alpha=1.
%\end{array}
%\right.
%\eestar
Then  it follows from the character function of $S$ that
$
\sum_{k=1}^n S_{nk}\stackrel{d}{=}S
$
and
\be
\sum_{k=1}^n W_{nk}S_{nk}\stackrel{d}{=}\Big(\frac{1}{n}\sum_{k=1}^n |W_{nk}|^{\alpha} \Big)^{1/\alpha} S.  \label{stable2}
\ee

By noting that
\bestar
|f(t,x)|\le (1/2)|tx|^{\beta}I(|x|\le 1)+2I(|x|>1),
\eestar
we have
\bestar
&&\Big|\mathbb{E}_* (e^{itW_{nk}\xi_{nk}})-1-it\tilde{\gamma} W_{nk}/n\Big|I(|W_{nk}|>M)\\
&=&\Big|itW_{nk}(\mathbb{E}(\xi_{nk}I(|\xi_{nk}|\le 1))-\tilde{\gamma}/n)+\mathbb{E}_* (f(tW_{nk}, \xi_{nk}))\Big|I(|W_{nk}|>M)\\
&\le&  t\big|\mathbb{E}(\xi_{n1}I(|\xi_{n1}|\le 1))-\tilde{\gamma}/n)\big||W_{nk}|+ C_t
\mathbb{E}(|\xi_{n1}|^{\beta} \wedge 1) |W_{nk}|^{\beta}I(|W_{nk}|>M).
\eestar
Note that $\sup_n \sum_{k=1}^n \mathbb{E}(|W_{nk}|)/n<\infty$ by $(A6)$.
It follows from  (\ref{stable3}), (\ref{stable4}) and $(A6)$ that
\bestar
\limsup_{M\rightarrow \infty}\limsup_{n\rightarrow \infty}\sum_{k=1}^n \mathbb{E}\Big(\Big|\mathbb{E}_* (e^{itW_{nk}\xi_{nk}})-1-it\tilde{\gamma} W_{nk}/n\Big|I(|W_{nk}|>M)\Big)\rightarrow 0.
\eestar
Similarly,
\bestar
\limsup_{M\rightarrow \infty}\limsup_{n\rightarrow \infty}\sum_{k=1}^n \mathbb{E}\Big(\Big|\mathbb{E}_* (e^{itW_{nk}S_{nk}})-1-it\tilde{\gamma} W_{nk}/n\Big|I(|W_{nk}|>M)\Big)\rightarrow 0.
\eestar
 Hence,  we have
\be
\limsup_{M\rightarrow \infty}\limsup_{n\rightarrow \infty}\sum_{k=1}^n \mathbb{E} \Big(\Big|\mathbb{E}_*(e^{itW_{nk}\xi_{nk}})-\mathbb{E}_* (e^{itW_{nk}S_{nk}})\Big|I(|W_{nk}|>M)\Big)=0. \label{stable1}
\ee

Since $(\mathbb{E}e^{it\xi_{n1}})^n\rightarrow e^{-|t|^{\alpha}}$ holds uniformly on any finite interval,  applying Theorem 17.1 in \cite{F1971} yields that
$n (\mathbb{E}e^{it\xi_{n1}}-1)\rightarrow -|t|^{\alpha}$ holds uniformly on any finite interval.  This implies that, for any fixed $t$,  the convergence $n (\mathbb{E}e^{ita\xi_{n1}}-1)\rightarrow -|ta|^{\alpha}$ holds uniformly for $a$ on any finite interval.
Similarly, for any fixed $t$,  $n \mathbb{E}(e^{itaS_{n1}}-1)\rightarrow -|ta|^{\alpha}$ holds uniformly for $a$ on any finite interval.
Hence for any fixed $M$,
\bestar
&&\sum_{k=1}^n \mathbb{E} \Big(\Big|\mathbb{E}_*(e^{itW_{nk}\xi_{nk}})-\mathbb{E}_*(e^{itW_{nk}S_{nk}})\Big|I(|W_{nk}|\le M)\Big)\\
&\le& n \sup_{|a|\le M} \Big|\mathbb{E}(e^{ita\xi_{n1}})-\mathbb{E}(e^{itaZ_{n1}})\Big|\\
&\le&  \sup_{|a|\le M} \Big|n\mathbb{E}(e^{ita\xi_{n1}}-1)+|ta|^{\alpha}\Big|+\sup_{|a|\le M} \Big|n\mathbb{E}(e^{itaS_{n1}}-1)+|ta|^{\alpha}\Big|\rightarrow 0.
\eestar
This together with (\ref{stable1}) implies that
\bestar
&&\Big|\mathbb{E}\Big( \exp\Big\{it  \sum_{k=1}^nW_{nk} \xi_{nk}  \Big\}\Big)-\mathbb{E}\Big(\exp\Big\{it  \sum_{k=1}^nW_{nk} S_{nk}  \Big\}\Big)\Big|\\
&\le&\mathbb{E}\Big|\mathbb{E}_*\Big( \exp\Big\{it \sum_{k=1}^nW_{nk} \xi_{nk}\Big\}\Big)-\mathbb{E}_* \Big(\exp\Big\{it \sum_{k=1}^n W_{nk} S_{nk}\Big\}\Big)\Big|\\
%&=&\mathbb{E} \Big|\prod_{k=1}^n \mathbb{E}_* \Big(\exp\Big\{it W_{nk} \xi_{nk}\Big\}\Big)-\prod_{k=1}^n\mathbb{E}_* \Big(\exp\Big\{it W_{nk} S_{nk}\Big\}\Big)\Big|\\
&\le& \sum_{k=1}^n \mathbb{E} \Big|\mathbb{E}_*(e^{itW_{nk}\xi_{nk}})-\mathbb{E}_*(e^{itW_{nk}S_{nk}})\Big|\rightarrow 0
\eestar
and the desired result follows by (\ref{stable2}) and $(A5)$.
\end{proof}

\section{Proof of Theorem  \ref{th1027} } \label{sect4}

In this section, we will prove Theorem  \ref{th1027} by applying the results on randomly weighted sums established in Section \ref{Sect2}.
In order to represent $T_n$ explicitly as a randomly weighted sum of $\{\xi_k, 1\le k\le n\}$, we first introduce a refined definition of $X_n$.

Let $\{\ep_n, n\ge 2\}$ and $\{\eta_n, n\ge 2\}$ be independent sequences of i.i.d. Bernoulli random variables with parameters $p$ and $r$, respectively.
Assume further that these sequences are independent of  $\{\xi_n, n\ge 1\}$ and $\{U_n, n\ge 2\}$.
Define  $X_1=\xi_1$ and  for $n\ge 2$,
\be \label{xndef2}
{X}_n :=
\left\{
\begin{array}{cl}
{X}_{U_n}, & \text{if}~  \ep_n=1,\eta_n=1;\\
 -X_{U_n}, & \text{if}~  \ep_n=1,\eta_n=0; \\
 \xi_{n}, & \text{if}~  \ep_n=0.
\end{array}
\right.
\ee

Now, consider the tree  $\mathbb{T}_n$  with vertex set $\{1,2, \cdots, n\}$ and
edge set $\{(U_k, k): k=2,\cdots, n\}$, which forms a random recursive tree of size $n$.
By using $\{\varepsilon_k, 2\le k\le  n\}$,
we  construct a Bernoulli bond percolation on $\mathbb{T}_n$ with survival parameter $1-p\in [0, 1]$: for each $2\le k\le n$, the edge $(U_k, k)$ is open if $\ep_k=0$ and closed if $\ep_k=1$.

For each vertex $k$ with $\ep_k=0$, we define $W_{nk}=0$.
For those with $\ep_k=1$, we proceed as follows.
Each such  vertex $k$ serves as the root of a connected component in the percolation. We assign to each vertex in this component   a value of $+1$ or $-1$   in increasing order of their labels. Specifically, vertex $k$ is assigned the value $1$. Then, for every other vertex $i$ ($i\ne k$) in the same component,  
we assign it in the same value as its parent $U_i$ if $\eta_i=1$,  and the opposite value if $\eta_i=0$ (note that the parent vertex $U_i$ of any $i\ne k$ in this component also belongs to the same component).
The weight $W_{nk}$ is then defined as the sum of all  assigned values in this component.

By applying the above construction, we obtain the representation
\be
T_n=\sum_{k=1}^n W_{nk}\xi_k. \label{weightwnk}
\ee
Moreover, in the proof of Theorem  \ref{th1027}, we also need to use  some other quantities related to the percolation. For any $n$, if $j$ is the root of a component (i.e., $\ep_j=1$),
let $N_j(n)$ denote the size of the component containing $j$. If $\ep_j=0$, we adopt the convention that $N_j(n)=0$. We define
\bestar
\nu_k(n):=\#\{1\le j\le n: N_j(n)=k\}
\eestar
as the number of components of size $k$.

Before proving Theorem \ref{th1027}, we provide several preliminary lemmas.
 Throughout, we assume the convention $T_0^0=0$.

\begin{lemma} \label{lemma1}
Let $n\ge 1$ be fixed and let $m_1,\cdots, m_n$ be nonnegative integers such that $m_1+\cdots+m_n=n$.
Then, conditional on $(N_1(n),\cdots, N_n(n))=(m_1, \cdots, m_n)$, the weights $\{W_{nj}, 1\le j\le n\}$ 
are independent, and $W_{nj}\stackrel{d}{=}T_{m_j}^0$ for each $j=1,\cdots, n$.  
\end{lemma}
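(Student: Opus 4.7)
The plan is to reduce the lemma to a structural fact about components of a percolated random recursive tree: conditional on the component-size vector, the internal parent map inside each component, after relabeling by rank order of its vertices, is a uniform random recursive tree on the appropriate number of vertices, and these relabeled trees are independent across components. Once this is in hand, the recursion $V_j=1$ and $V_i = (2\eta_i - 1) V_{U_i}$ for non-root $i\in C(j)$ that defines $W_{nj} = \sum_{i \in C(j)} V_i$ will match, component by component, the recursive definition of the elephant random walk $T^0$. For indices $j$ with $m_j=0$ there is nothing to check since $W_{nj}=0=T^0_0$ by convention, so attention focuses on components rooted at $j$ with $m_j\ge 1$, for which $j$ is the smallest vertex in $C(j)$.

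The core step is a direct computation of the joint law of the partition $(C_1,\ldots,C_K)$ together with its internal parent maps. Since $\{U_v\}$ and $\{\varepsilon_v\}$ are mutually independent with $U_v$ uniform on $\{1,\ldots,v-1\}$ and $\varepsilon_v\sim\mathrm{Bernoulli}(p)$, the probability of a specific partition together with a specific collection of internal parents factors as $\prod_{v\text{ non-root}}(v-1)^{-1}\cdot p^{K-1}(1-p)^{n-K}$: for each non-root $v\in C_k$ one needs $\varepsilon_v=0$ and $U_v$ equal to the prescribed value in $C_k\cap\{1,\ldots,v-1\}$, while for each root $v\ge 2$ one needs $\varepsilon_v=1$ and the value of $U_v$ is unconstrained (so it sums out to $1$). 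Summing over all parent maps consistent with the partition produces a factor $\prod_{k=1}^K (m_k-1)!$, the number of valid internal maps. Dividing gives the conditional law: given the partition, each internal parent map is chosen independently and uniformly among its $(m_k-1)!$ possibilities, i.e.\ after rank-order relabeling $C_k$ carries a uniform random recursive tree on $m_k$ vertices, independently across $k$. Because this conditional law depends on the partition only through the block sizes, it survives marginalization, and the same joint law of relabeled internal trees holds conditionally on $(N_1(n),\ldots,N_n(n))=(m_1,\ldots,m_n)$.

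To conclude, I would combine this structural fact with the independence of $\{\eta_i\}$ from $\{U_v,\varepsilon_v\}$. Restricted to the vertex set of $C(j)$ the $\eta_i$'s form an i.i.d.\ $\mathrm{Bernoulli}(r)$ family, and the families attached to distinct components are disjoint and hence independent. Consequently each $W_{nj}$ is an instance of the functional that builds $T^0$ from a uniform random recursive tree together with an independent i.i.d.\ $\mathrm{Bernoulli}(r)$ sequence of $\eta$'s with memory parameter $r$, yielding $W_{nj}\stackrel{d}{=}T^0_{m_j}$ with independence across $j$.

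The main obstacle is the combinatorial bookkeeping in the internal-structure computation: one has to condition in the right order (first on the full partition, then on sizes) and verify that the conditional law of the internal parent map depends only on the block sizes, so that marginalizing over partitions with fixed sizes preserves both the independence of components and the uniform-random-recursive-tree law of each piece. The subsequent matching with the elephant random walk $T^0$ is then a direct comparison of recursive definitions.
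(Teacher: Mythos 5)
Your proof is correct. The paper itself disposes of this lemma essentially by citation: after restating the construction of $T^0$ through the $\eta$-sequence and observing that $\{\eta_n\}$ is i.i.d.\ and independent of $\{(U_n,\varepsilon_n)\}$, it invokes Lemma 5.3 of \cite{B2024}, which supplies precisely the structural fact you establish by hand, namely that conditionally on the cluster sizes, the percolation clusters of the random recursive tree are, after rank relabeling, independent random recursive trees of the prescribed sizes. So your outline is the same (reduce to this cluster decomposition, then graft on the independent Bernoulli$(r)$ signs to match the ERW recursion), but where the paper outsources the key ingredient you prove it directly: each configuration consisting of a partition together with an internal parent map has probability $(1-p)^{n-K}\,p^{K-1}\prod_{v\ \text{non-root}}(v-1)^{-1}$, which is constant over the $\prod_k (m_k-1)!$ parent maps compatible with a fixed partition, so conditionally on the partition the relabeled internal trees are independent uniform recursive trees, and since this conditional law depends on the partition only through its block sizes it passes to the coarser conditioning on $(N_1(n),\dots,N_n(n))$. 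What the citation buys the paper is brevity; what your computation buys is a self-contained verification that also makes explicit the two points easiest to gloss over: the root of each cluster is its smallest label (so rank relabeling really aligns the cluster recursion with the definition of $T^0$), and the conditional uniformity holds because the attachment probability $1/(v-1)$ depends only on the child, not on the chosen parent. The concluding step, that the $\eta$'s restricted to distinct clusters are disjoint i.i.d.\ Bernoulli$(r)$ families independent of the tree structure, hence $W_{nj}\stackrel{d}{=}T^0_{m_j}$ with independence across $j$ (trivially including the indices with $m_j=0$, where $W_{nj}=0=T^0_0$), coincides with the paper's argument.
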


\begin{proof}
Similar to (\ref{xndef2}), we also provide a refined definition of $T_n^0$. 
Set $X_1^0:=1$ and for $n\ge 2$,
\bestar
{X}_n^0:=%(2\eta_n-1)X_{U_n}^0=
\left\{
\begin{array}{ll}
{X}^0_{U_n}, & \text{if}~ \eta_n=1,\\
 -X^0_{U_n}, & \text{if}~ \eta_n=0.
\end{array}
\right.
\eestar
Then
$
T_n^0:=\sum_{k=1}^n X^0_n
$
is an ERW with the initial value  $T_1^0=1$ and the memory parameter $r=\mathbb{P}(
\eta_1=1)$.

By utilizing the constructions of $W_{nk}$ and $T_n^0$, and noting that $\{\eta_n, n\ge 1\}$ are i.i.d. and independent of $\{(U_n,\ep_n),  n\ge 2\}$,  Lemma \ref{lemma1} is obtained  from a direct application of Lemma 5.3 in \cite{B2024}.
\end{proof}

\begin{lemma} \label{lemma0}
For any $\beta\in (0,4]$, we have
\be
\mathbb{E}(|T_n^0|^{\beta})=O(a_r(n))^{\beta/2}),  \label{resultlemma0}
\ee
where
\bestar
a_r(n):=\left\{ 
\begin{array}{ll}
n, &   r<3/4;\\
n\log n, &   r=3/4;\\
n^{4r-2}, &   r>3/4.
\end{array}
\right.
\eestar
\end{lemma}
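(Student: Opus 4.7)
The plan is to bound the second and fourth moments of $T_n^0$ by explicit recursions, and then use Jensen's inequality to interpolate to all $\beta\in(0,4]$. The three-regime scaling $a_r(n)$ simply reflects the well-known phase transition of the ERW at the critical memory $r=3/4$, where the second moment crosses from linear to superlinear growth.

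For the second moment I start from the refined recursive definition of $X_n^0$. Since $U_n$ is uniform on $\{1,\dots,n-1\}$ and independent of the past, and $|X_j^0|=1$ for every $j$ by induction, the conditional mean of the increment equals $\mathbb{E}(X_n^0\mid\mathcal{F}_{n-1})=\frac{2r-1}{n-1}T_{n-1}^0$, while $(X_n^0)^2=1$. Squaring $T_n^0=T_{n-1}^0+X_n^0$ and taking expectations yields, for $u_n:=\mathbb{E}((T_n^0)^2)$, the recursion $u_n=\bigl(1+\tfrac{2(2r-1)}{n-1}\bigr)u_{n-1}+1$ with $u_1=1$. Variation of constants gives $u_n=A_n\bigl(1+\sum_{j=1}^{n-1}A_j^{-1}\bigr)$ with $A_n=\prod_{k=1}^{n-1}(1+2(2r-1)/k)\asymp n^{2(2r-1)}$ when $2r-1>0$ and $O(1)$ otherwise, and a three-case analysis at the threshold $r=3/4$ gives $u_n=O(a_r(n))$, i.e.\ the claim for $\beta=2$.

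For the fourth moment I expand $(T_n^0)^4=(T_{n-1}^0+X_n^0)^4$ and use $(X_n^0)^{2k}=1$, $(X_n^0)^{2k+1}=X_n^0$ to obtain the closed recursion
$$\mathbb{E}((T_n^0)^4)=\Bigl(1+\tfrac{4(2r-1)}{n-1}\Bigr)\mathbb{E}((T_{n-1}^0)^4)+\Bigl(6+\tfrac{4(2r-1)}{n-1}\Bigr)u_{n-1}+1.$$
The homogeneous factor $B_n=\prod_{k=1}^{n-1}(1+4(2r-1)/k)\asymp n^{4(2r-1)}$ when $r>1/2$ (and is bounded when $r\le 1/2$) drives the supercritical scale, while inserting the Step 1 bound $u_{n-1}=O(a_r(n))$ into variation of constants gives $\mathbb{E}((T_n^0)^4)=O\bigl(B_n\sum_{j=1}^{n}a_r(j)/B_j\bigr)$; a direct three-case computation confirms this matches $a_r(n)^2$ in each regime.

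Finally, for general $\beta\in(0,4]$ Jensen's inequality interpolates: if $\beta\in(0,2]$ then $\mathbb{E}(|T_n^0|^\beta)\le(\mathbb{E}((T_n^0)^2))^{\beta/2}$, and if $\beta\in(2,4]$ then $\mathbb{E}(|T_n^0|^\beta)\le(\mathbb{E}((T_n^0)^4))^{\beta/4}$, and both give $O(a_r(n)^{\beta/2})$. I expect the main obstacle to be the critical case $r=3/4$: there $4(2r-1)=2$ puts the inhomogeneous term of the fourth-moment recursion exactly at the boundary between summable and non-summable, producing the extra factor $(\log n)^2$ needed so that $\mathbb{E}((T_n^0)^4)=O((n\log n)^2)=O(a_r(n)^2)$; the same threshold arithmetic already requires some care in the second-moment analysis.
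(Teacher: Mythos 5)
Your proposal is correct and follows essentially the same route as the paper: the second- and fourth-moment recursions you derive directly from the dynamics are exactly (\ref{rec1})--(\ref{rec2}) (which the paper quotes from Bercu's martingale analysis), they are resolved by the same three-case asymptotics at the threshold $4r-2=1$, and the final interpolation to $\beta\in(0,4]$ is the same Jensen/Lyapunov step (the paper uses only the fourth moment for this). The only cosmetic difference is that you solve the recursions by variation of constants with $A_n=\prod_{k<n}(1+(4r-2)/k)$ where the paper writes the solution through Gamma-function ratios; in carrying out your three-case computation for $r\le 1/2$ you should use $A_n\asymp n^{4r-2}$ (not merely $A_n=O(1)$), starting the product beyond the finitely many indices where a factor can vanish (e.g.\ $r=0$ or $r=1/4$), to recover the stated $O(n)$ and $O(n^2)$ bounds.
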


\begin{proof}
Write $\gamma:=4r-2$.
It follows from the proof of Theorem 3.8 in \cite{BE2017} that
\be 
\mathbb{E}((T_{n+1}^0)^2)=\frac{n+\gamma}{n}\mathbb{E}((T_n^0)^2)+1 \label{rec1}
\ee 
and
\be 
\mathbb{E}((T_{n+1}^0)^4)=1+\frac{6n+2\gamma}{n}\mathbb{E}((T_n^0)^2)+
\frac{n+2\gamma}{n} \mathbb{E}((T_n^0)^4). \label{rec2}
\ee 

Using  (\ref{rec1}) and recalling $T_1^0=1$, we have
\be 
\mathbb{E}((T_n^0)^2)=\frac{\Gamma(n+\gamma)}{\Gamma(n)}\sum_{k=1}^{n} \frac{\Gamma(k)}{\Gamma(k+\gamma)},  \label{ETn}
\ee 
where $\Gamma(\cdot)$ stands for the gamma function.
Note that $\Gamma(n+\gamma)/\Gamma(n)\sim n^{\gamma}$ as $n\rightarrow \infty$.
If $\gamma\le 1$, then 
\bestar
\frac{\Gamma(n+\gamma)}{\Gamma(n)}\sum_{k=1}^{n} \frac{\Gamma(k)}{\Gamma(k+\gamma)}
\sim n^{\gamma} \sum_{k=1}^n k^{-\gamma}\sim  \left\{ 
\begin{array}{ll}
n/(1-\gamma), &   \gamma<1;\\
n\log n, &   \gamma=1.
\end{array}
\right.
\eestar
If $\gamma> 1$, then
\bestar
c_0:=\sum_{k=1}^{\infty} \frac{\Gamma(k)}{\Gamma(k+\gamma)}
<\infty
\eestar
and hence $\mathbb{E}((T_n^0)^2)\sim c_0 n^{\gamma}$. Furthermore,  $c_0=1/((\gamma-1)\Gamma(\gamma))$ by  Lemma B.1 in 
\cite{BE2017}.
Combining the above facts, we obtain that 
\bestar
\mathbb{E}((T_n^0)^2)
\sim \left\{ 
\begin{array}{ll}
n/(3-4r), &   r<3/4;\\
n\log n, &   r=3/4;\\
c_0n^{4r-2}, &   r>3/4.
\end{array}
\right.
\eestar

Similarly, by (\ref{rec2}), there exists a constant $c_1$  depending only on $r$ 
such that
\bestar
\mathbb{E}((T_n^0)^4)=\frac{\Gamma(n+2\gamma)}{\Gamma(n)}\sum_{k=1}^{n} \frac{\Gamma(k)}{\Gamma(k+2\gamma)}d_{k-1}
\sim \left\{ 
\begin{array}{ll}
3n^2/(3-4r), &   r<3/4;\\
3n^2\log^2 n, &   r=3/4;\\
c_1 n^{8r-4}, &   r>3/4,
\end{array}
\right.
\eestar
where $d_0=1$ and
\bestar 
d_n=1+\frac{6n+2\gamma}{n}\mathbb{E}((T_n^0)^2),~~n\ge 1.
\eestar

Hence,  for any $\beta\in (0,4]$, 
\bestar
\mathbb{E}(|T_n^0|^{\beta})\le (\mathbb{E}((T_n^0)^4))^{\beta/4}=O(
(a_r(n))^{\beta/2}).
\eestar
This completes the proof of Lemma \ref{lemma0}.
\end{proof}

For two sequences of positive numbers $(c_n)$ and $(d_n)$, we write $c_n \asymp d_n$
if   $0<\liminf_{n\rightarrow\infty} c_n/d_n\le \limsup_{n\rightarrow\infty} c_n/d_n<\infty$.

\begin{lemma} \label{lemma38}  Define
$
Z_{l}(n)=\sum_{k=1}^n k^l \nu_k(n).
$
For any $0<p<1$ and $l\ge 0$, we have
\be
\mathbb{E}(Z_{l}(n))  \asymp b_{l}(n),   \label{Zln}
\ee
where
\be
b_{l}(n):=\left\{
\begin{array}{ll}
n^{lp}, & lp>1;\\
 n\log n,  & lp=1;\\
 n, & lp<1.
\end{array}
\right.   \label{bln}
\ee
Furthermore,  if $lp<1$, then
\be
\limsup_{m\rightarrow \infty}\limsup_{n\rightarrow \infty} \Big(\frac{1}{n}\sum_{k=m}^n k^l \mathbb{E}(\nu_k(n))\Big) =0.  \label{summn}
\ee
\end{lemma}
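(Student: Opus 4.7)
The plan is to analyse $\mathbb{E}(Z_l(n))$ through a moment recursion for $Z_l(n)=\sum_k k^l\nu_k(n)$, and to establish (\ref{summn}) through a parallel recursion for the cluster-size profile $P_k(n):=\mathbb{E}(\nu_k(n))$. Both recursions come from tracking the change when vertex $n+1$ is attached to $\mathbb{T}_n$: the new edge is retained with probability $p$, in which case $n+1$ is absorbed (size-biased) into the cluster of $U_{n+1}$; otherwise $n+1$ opens a fresh singleton. The threshold $lp=1$ asserted in the lemma identifies $p$ as the relevant retention probability in this description.

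For $\mathbb{E}(Z_l(n))$, the expected increment is $(k+1)^l-k^l$ if $n+1$ joins a size-$k$ cluster (probability $pk\nu_k(n)/n$) and $1$ if it opens a singleton, giving
\[
\mathbb{E}(Z_l(n+1))=\mathbb{E}(Z_l(n))+\frac{p}{n}\,\mathbb{E}\Big(\sum_k k\bigl((k+1)^l-k^l\bigr)\nu_k(n)\Big)+(1-p).
\]
Since $(k+1)^l-k^l=lk^{l-1}+O(k^{l-2})$, the dominant term on the right is $pl\,\mathbb{E}(Z_l(n))/n$, so the recursion reduces to $\mathbb{E}(Z_l(n+1))\approx\mathbb{E}(Z_l(n))(1+pl/n)+\text{lower order}$. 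Its homogeneous solution grows like $n^{pl}$ (from the integrating factor $\prod_m(1+pl/m)\sim n^{pl}$), while the particular solution under the constant drift $(1-p)$ is linear in $n$. Standard analysis of this linear recursion then yields the three regimes: $\mathbb{E}(Z_l(n))\asymp n$ for $pl<1$, $\asymp n\log n$ at the resonance $pl=1$, and $\asymp n^{pl}$ for $pl>1$. For integer $l$ this is a clean induction on $l$ that absorbs the lower-order cross terms $\mathbb{E}(Z_j(n))$, $j<l$; for general real $l\ge 0$ one sandwiches by neighbouring integer powers or else inducts on $\lceil l\rceil$, using $(k+1)^l-k^l\le l(k+1)^{l-1}$ to control the remainder.

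For (\ref{summn}) I would derive the analogous linear recursion
\[
P_1(n+1)=P_1(n)\Big(1-\tfrac{p}{n}\Big)+(1-p),\qquad P_k(n+1)=P_k(n)\Big(1-\tfrac{pk}{n}\Big)+\tfrac{p(k-1)}{n}P_{k-1}(n)\quad(k\ge 2),
\]
and solve it via the integrating factor $\phi_k(n)=\prod_{m=k}^{n-1}(1-pk/m)\sim C_k n^{-pk}$ together with induction on $k$. This produces $P_k(n)\sim c_k n$ with $c_k=\tfrac{1-p}{p}\mathrm{B}(k,1+1/p)$ and, by Stirling, $c_k\asymp k^{-1-1/p}$, as well as a uniform upper bound $P_k(n)\le Cnc_k$. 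When $lp<1$ the series $\sum_k k^l c_k$ then converges, so $\tfrac{1}{n}\sum_{k=m}^n k^lP_k(n)\le C\sum_{k\ge m}k^l c_k\to 0$ as $m\to\infty$, which is (\ref{summn}). The main technical obstacle will be the uniform-in-$k$ upper bound $P_k(n)\le Cnc_k$: the recursion coefficient $1-pk/n$ becomes negative for $k>n/p$, so the induction on $k$ must cover all $k\le n$ at once, either by writing $P_k(n)$ in closed form via gamma functions or by a martingale argument on the normalised quantity $\nu_k(n)/(nc_k)$.
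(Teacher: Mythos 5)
Your treatment of (\ref{Zln}) is essentially the paper's own argument: the same one-step computation $\mathbb{E}\big(Z_l(n+1)-Z_l(n)\mid\mathscr{F}_n\big)=1-p+p\sum_{k}\frac{k\nu_k(n)}{n}\bigl((k+1)^l-k^l\bigr)$, the expansion $(k+1)^l-k^l=lk^{l-1}+O(k^{l-2})$, and an induction on $\lceil l\rceil$ in which the lower-order term is absorbed through the already-controlled $\mathbb{E}(Z_{l-1}(n))$, the resulting linear recursion being handled via the product $\prod_{k<n}(1+lp/k)\asymp n^{lp}$. Two cautions. First, your alternative of sandwiching a non-integer $l$ between integer powers cannot give (\ref{Zln}): if $\lfloor l\rfloor p<1<lp$ it only traps $\mathbb{E}(Z_l(n))$ between order $n$ and order $n^{\lceil l\rceil p}$, so the induction on $\lceil l\rceil$ is the version to keep. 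Second, the inequality $(k+1)^l-k^l\le l(k+1)^{l-1}$ is too lossy wherever you deploy it: applied to the full increment it turns the coefficient $1+lp/n$ into $1+l2^{l-1}p/n$ and inflates the exponent, and applied to the remainder it yields a term of order $Z_l(n)/n$ rather than $Z_{l-1}(n)/n$; what is needed is the second-order Taylor bound $(k+1)^l-k^l-lk^{l-1}\le C_l k^{l-2}$, which is exactly what the paper uses. Also, the three-regime case analysis you call ``standard'' and the matching lower bound $\mathbb{E}(Z_l(n))\ge c\,b_l(n)$ are asserted rather than carried out; they are short but not free.

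For (\ref{summn}) you genuinely depart from the paper, which never proves a pointwise bound on $\mathbb{E}(\nu_k(n))$: it runs a second induction on $l$ over the truncated sums $\sum_{k\ge m}k^l\mathbb{E}(\nu_k(n))$ using the same one-step recursion restricted to $k\ge m$, with the base case $0\le l<1$ settled by Simon's fixed-$k$ limit $\mathbb{E}(\nu_k(n))/n\to\frac{1-p}{p}\mathrm{B}(k,1+1/p)$, the identity $\frac{1-p}{p}\sum_k k\,\mathrm{B}(k,1+1/p)=1$, and the constraint $\sum_k k\nu_k(n)=n$. You instead aim for the uniform bound $\mathbb{E}(\nu_k(n))\le Cnc_k$ with $c_k=\frac{1-p}{p}\mathrm{B}(k,1+1/p)\asymp k^{-1-1/p}$, from which (\ref{summn}) is immediate by summability when $lp<1$. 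That is a stronger intermediate statement and a legitimate shortcut, but as written it remains a conjecture inside your argument (you yourself flag it as the main obstacle), so your proof of (\ref{summn}) is incomplete as it stands. The good news is that the obstacle dissolves: the regime $k>n/p$ where $1-pk/n<0$ is vacuous, since $k>n/p>n$ forces $\nu_k(n)=0$, while for $k\le n$ one has $1-pk/n\ge 1-p>0$; and the induction on $n$, simultaneously over all $k$, closes with equality because $c_k/c_{k-1}=(k-1)/(k+1/p)$, i.e.\ $p(k-1)c_{k-1}=(1+pk)c_k$, so that $(1-pk/n)\,Anc_k+\frac{p(k-1)}{n}Anc_{k-1}=A(n+1)c_k$; the boundary case $k=n+1$ gives $pP_n(n)\le A(pn+1)c_{n+1}\le A(n+1)c_{n+1}$, and $A=(1+p)/(1-p)$ covers $k=1$ and the base $n=1$. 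Filling in these few lines would make your route to (\ref{summn}) complete, and arguably cleaner than the paper's, since it gives a pointwise bound on $\mathbb{E}(\nu_k(n))$ rather than only tail-sum control.
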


\begin{proof} The first step is to prove by induction on $i$ that for any $i \in \mathbb{N}$,  the inequality~$\mathbb{E}(Z_{l}(n)) \le C_l b_{l}(n)$ holds for all $n\in \mathbb{N}$ and all $l\ge 0$ satisfying $i-1\le l<i$. Throughout this proof,  $C_l$ denotes a positive constant   depending only on $p$ and $l$, which  may take a different
value in each appearance.

It is clear for $i=1$ since $\mathbb{E}(Z_{l}(n))\le \mathbb{E}(\sum_{k=1}^n k \nu_k(n))=n$ for any $0\le l<1$.
Assume the result holds for $i=r-1$. We proceed to the case $i=r$.
If $\epsilon_{n+1}=1$ and  $U_{n+1}$ belongs to a connected component of size $k$
 in the percolation at time $n$, then  $\nu_k(n+1)=\nu_k(n)-1, \nu_{k+1}(n+1)=\nu_{k+1}(n)+1$, and $\nu_j(n+1)=\nu_j(n)$ for any $j\not \in \{k, k+1\}$.
Hence, in this case, 
$
 Z_{l}(n+1)-Z_{l}(n)= (k+1)^l-k^l.
$
This implies that
\bestar
\mathbb{E}\Big(Z_{l}(n+1)-Z_{l}(n)\Big|\mathscr{F}_n, \epsilon_{n+1}=1\Big)
=\sum_{k=1}^{n} \frac{k\nu_{k}(n)}{n} ((k+1)^l-k^l),
\eestar
where $\mathscr{F}_n=\sigma(U_2,\cdots, U_n)$.
Note that in the case $\epsilon_{n+1}=0$,  we have $Z_{l}(n+1)-Z_{l}(n)=1$. Therefore,   
%It follows from (\ref{zln1}) and (\ref{zln0}) that
\be
\mathbb{E}\Big(Z_{l}(n+1)-Z_{l}(n)\Big|\mathscr{F}_n\Big)=1-p+p\sum_{k=1}^{n} \frac{k\nu_{k}(n)}{n} ((k+1)^l-k^l). \label{condrecur}
\ee
By applying Taylor's theorem,  we obtain, for any $k, l\ge 1$,
\bestar
(k+1)^l-k^l-lk^{l-1}=\frac{l(l-1)}{2}\zeta_k^{l-2}\le l(l-1)2^{|l-2|-1} k^{l-2},
\eestar
 where $k<\zeta_k<k+1$.
It follows from (\ref{condrecur})  that
\bestar
\mathbb{E}(Z_{l}(n+1))-\mathbb{E}(Z_{l}(n))&=&1-p+p\sum_{k=1}^{n} \frac{k \mathbb{E}(\nu_{k}(n))}{n} ((k+1)^l-k^l)\\
&\le & 1-p+lpn^{-1}\mathbb{E}(Z_{l}(n))+C_ln^{-1}\mathbb{E}(Z_{l-1}(n)).
\eestar
By  the induction hypothesis, we have
\be
\mathbb{E}(Z_{l}(n+1)) \le  \frac{n+lp}{n}\mathbb{E}(Z_{l}(n))+C_ln^{-1}b_{l-1}(n), \label{induction}
\ee
where
\bestar
b_{l-1}(n)=
\left\{
\begin{array}{cc}
 n^{(l-1)p}, & (l-1)p>1;\\
 n\log n,  & (l-1)p=1;\\
 n, &  (l-1)p<1.
\end{array}
\right.
\eestar
If $lp=1$, then $b_{l-1}(n)=n$ and
$
\mathbb{E}(Z_{l}(n+1))\le (1+1/n)\mathbb{E}(Z_{l}(n))+C_l.
$
Hence, for $n\ge 2$,
\bestar
\mathbb{E}(Z_{l}(n))\le C_ln\sum_{k=1}^{n} \frac{1}{k}\le C_ln\log n.
\eestar
If $lp\ne 1$ and $(l-1)p<1$, then $b_{l-1}(n)=n$ and by  (\ref{induction}),
%\bestar
%\mathbb{E}(Z_{l}(n+1))\le (1+lp/n)\mathbb{E}(Z_{l}(n))+C.
%\eestar
%Hence
\bestar
\mathbb{E}(Z_{l}(n+1))+\frac{C_l(n+1)}{lp-1} \le \frac{n+lp}{n}\Big(\mathbb{E}(Z_{l}(n))+\frac{C_ln}{lp-1} \Big).
\eestar
This implies that
\bestar
\mathbb{E}(Z_{l}(n))+\frac{C_ln}{lp-1} \le \Big(1+\frac{C_l}{lp-1}\Big)a_{n}(l),
\eestar
where
\bestar
a_n(l) := \prod_{k=1}^{n-1} \frac{k+lp}{k} = \frac{\Gamma(n+lp)}{\Gamma(n)\Gamma(lp+1)}=\frac{n^{lp}}{\Gamma(lp+1)} (1+O(n^{-1})).
\label{anl}
\eestar
Hence
\bestar
\mathbb{E}(Z_{l}(n))\le C_ln^{(lp)\vee 1}.
\eestar
If $(l-1)p\ge 1$, then for large $n$,
\begin{eqnarray*}
	b_{l-1}(n+1)-b_{l-1}(n)\le (l-1/2)pn^{-1}b_{l-1}(n).
\end{eqnarray*}
 This together with (\ref{induction}) yields that 
\bestar
\mathbb{E}(Z_{l}(n+1))+\frac{2C_lb_{l-1}(n+1)}{p} \le \frac{n+lp}{n}\Big(\mathbb{E}(Z_{l}(n))+\frac{2C_lb_{l-1}(n)}{p}\Big).
\eestar
Hence,
\bestar
\mathbb{E}(Z_{l}(n))\le C_ln^{lp}.
\eestar
Combining the above facts completes the induction. Therefore,  $\mathbb{E}(Z_{l}(n)) \le C_l b_{l}(n)$ for $l\ge 0$. Similarly, we can establish that  $\mathbb{E}(Z_{l}(n)) \ge C_l b_{l}(n)$ for $l\ge 0$.  Hence (\ref{Zln})
holds.

Next, we  will use a similar approach to prove (\ref{summn}).  Note that  (see \cite{S1955})  for any fixed $k\ge 1$, 
\bestar
\lim_{n\rightarrow \infty}\frac{\mathbb{E}(\nu_k(n))}{n}=\frac{1-p}{p} \mathrm{B}(k, 1+1/p).
\eestar
Hence, for any $0\le l<1$ and $m\in \mathbb{N}$, we get that, as $ n\rightarrow \infty$,
\bestar
\frac{1}{n} \sum_{k=m}^n k^l \mathbb{E}(\nu_k(n)) &\le& \frac{1}{n} \sum_{k=m}^n k \mathbb{E}(\nu_k(n))
= 1-\frac{1}{n} \sum_{k=1}^{m-1} k \mathbb{E}(\nu_k(n))\\
&\rightarrow& 1-\frac{1-p}{p}\sum_{k=1}^{m-1} k\mathrm{B}(k, 1+1/p),
\eestar 
where we have used the fact that $\sum_{k=1}^n k\nu_k(n)=n$.
By applying (8) in \cite{B2024}, we have
\be
\sum_{k=1}^{\infty}\frac{1-p}{p} k\mathrm{B}(k, 1+1/p)=1.  \label{c1p}
\ee
Therefore,
\bestar
\limsup_{m\rightarrow \infty}\limsup_{n\rightarrow \infty}\frac{1}{n} \sum_{k=m}^n k^l \mathbb{E}(\nu_k(n)) \le
\limsup_{m\rightarrow \infty} \frac{1-p}{p}\sum_{k=m}^{\infty} k\mathrm{B}(k, 1+1/p)=0,
\eestar 
which implies (\ref{summn}) for $0\le l<1$.

Suppose  that  (\ref{summn}) holds for  $r\in \mathbb{N}$ and $r-1\le l<r$ satisfying $lp<1$. 
Then for any $r\le l<r+1$ and  $\varepsilon>0$,
by noting that (applying Stirling's formula  gives that $\mathrm{B}(m, 1+1/p)\asymp m^{-1-1/p}$ as $m\rightarrow \infty$)
\bestar
\limsup_{m\rightarrow \infty}\limsup_{n\rightarrow \infty}\frac{m^l \mathbb{E}(\nu_m(n+1))}{n}= \limsup_{m\rightarrow \infty}\frac{1-p}{p} m^l\mathrm{B}(m, 1+1/p)=0
\eestar
holds for any $l$ with $lp<1$,
 there exists $m=m(\varepsilon)$
such that
\bestar
\limsup_{n\rightarrow \infty}\frac{m^l \mathbb{E}(\nu_m(n+1))}{n}<\varepsilon\qquad \mbox{and} \qquad\limsup_{n\rightarrow \infty} \Big(\frac{1}{n}\sum_{k=m}^n k^{l-1} \mathbb{E}(\nu_k(n))\Big)<\varepsilon.
\eestar
Using arguments similar to those in the proof of (\ref{Zln}),
we obtain that  if $lp<1$, then  
for sufficiently large $n$,
\bestar
\sum_{k=m}^{n+1} k^{l} \mathbb{E}(\nu_k(n+1))&\le& \frac{n+lp}{n}\sum_{k=m}^n k^{l} \mathbb{E}(\nu_k(n))+\frac{m^l \mathbb{E}(\nu_{m}(n+1))}{n}+C_ln^{-1}\sum_{k=m}^n k^{l-1} \mathbb{E}(\nu_k(n))\\
&\le& \frac{n+lp}{n}\sum_{k=m}^n k^{l} \mathbb{E}(\nu_k(n))+C_l\varepsilon,
\eestar
and consequently,
\begin{eqnarray*}
	\frac{1}{n}\sum_{k=m}^n k^l \mathbb{E}(\nu_k(n))\le C_l \varepsilon.
\end{eqnarray*}
Hence, (\ref{summn}) holds for $r\le l<r+1$ satisfying $lp<1$. The proof of (\ref{summn}) is complete by induction.
\end{proof}

\begin{lemma} \label{lemmaa1}
Let $\alpha \in (0,2], r\in [0,1]$ and  $p\in (0,1)$ such that $(2r-1)\alpha p<1$. 
Then
\be
\frac{1}{n}\sum_{k=1}^n |W_{nk}|^{\alpha}\stackrel{P}{\longrightarrow} c(\alpha, p,r),  \label{weakfsum}
\ee
where $c(\alpha, p,r)$ is defined in (\ref{calphapr}).
\end{lemma}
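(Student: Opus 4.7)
The plan is to condition on the percolation structure so that, by Lemma \ref{lemma1}, the weights become conditionally independent with laws given by the ERWs $T^0_j$. Writing $\mathcal{F}:=\sigma(U_k,\epsilon_k:2\le k\le n)$ and $X_n:=\frac{1}{n}\sum_{k=1}^n|W_{nk}|^\alpha$, I obtain
\bestar
Y_n:=\mathbb{E}(X_n\mid\mathcal{F})=\frac{1}{n}\sum_{j=1}^n\nu_j(n)\,\mathbb{E}(|T_j^0|^\alpha), \qquad \mathrm{Var}(X_n\mid\mathcal{F})\le\frac{1}{n^2}\sum_{j=1}^n\nu_j(n)\,\mathbb{E}(|T_j^0|^{2\alpha}).
\eestar
The strategy is then to show that $X_n-Y_n\to 0$ in $L^2$ and that $Y_n\to c(\alpha,p,r)$ in probability; combined, these give the conclusion (\ref{weakfsum}).

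For the conditional fluctuation term, I would apply Lemma \ref{lemma0} with $\beta=2\alpha\le 4$ to bound $\mathbb{E}(|T_j^0|^{2\alpha})\le C(a_r(j))^\alpha$. Taking expectation and invoking Lemma \ref{lemma38}, the exponent $l$ of $j$ that appears equals $\alpha$ when $r<3/4$, equals $\alpha+\delta$ for an arbitrarily small $\delta>0$ when $r=3/4$ (absorbing the $\log$ factor), and equals $(4r-2)\alpha$ when $r>3/4$. In each regime $lp<2$, using $\alpha\le 2$, $p<1$, and the hypothesis $(2r-1)\alpha p<1$. Thus $b_l(n)/n^2\to 0$, so $\mathbb{E}(\mathrm{Var}(X_n\mid\mathcal{F}))\to 0$ and hence $X_n-Y_n\to 0$ in $L^2$.

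For $Y_n$ I would use truncation: write $Y_n=Y_n^{(K)}+R_n^{(K)}$ with $Y_n^{(K)}:=\frac{1}{n}\sum_{j=1}^K\nu_j(n)\,\mathbb{E}(|T_j^0|^\alpha)$. For the tail, Lyapunov's inequality combined with Lemma \ref{lemma0} gives $\mathbb{E}(|T_j^0|^\alpha)\le C(a_r(j))^{\alpha/2}$, and the new exponent $l'\in\{\alpha/2,\,\alpha/2+\delta,\,(2r-1)\alpha\}$ now satisfies $l'p<1$ directly from the hypothesis. Consequently, (\ref{summn}) in Lemma \ref{lemma38} forces $\limsup_{K\to\infty}\limsup_{n\to\infty}\mathbb{E}(R_n^{(K)})=0$, and Markov's inequality renders $R_n^{(K)}$ negligible in probability uniformly as $K$ grows. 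For the truncated sum $Y_n^{(K)}$, I would apply the classical law of large numbers for Simon's percolation model, namely $\nu_j(n)/n\stackrel{P}{\rightarrow}\frac{1-p}{p}\mathrm{B}(j,1+1/p)$ for each fixed $j$, to conclude $Y_n^{(K)}\stackrel{P}{\rightarrow}c_K:=\frac{1-p}{p}\sum_{j=1}^K\mathbb{E}(|T_j^0|^\alpha)\mathrm{B}(j,1+1/p)$. Since $c_K\to c(\alpha,p,r)$ as $K\to\infty$ (the absolute convergence of the defining series follows from the same tail estimate together with the asymptotic $\mathrm{B}(j,1+1/p)\asymp j^{-1-1/p}$), this yields $Y_n\stackrel{P}{\rightarrow}c(\alpha,p,r)$.

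The chief obstacle I anticipate is the ingredient $\nu_j(n)/n\stackrel{P}{\rightarrow}\frac{1-p}{p}\mathrm{B}(j,1+1/p)$ for each fixed $j$: only the corresponding convergence of expectations is recorded in the proof of Lemma \ref{lemma38}. I would supply it by proving a variance bound $\mathrm{Var}(\nu_j(n))=O(n)$ via a second-moment recursion analogous to (\ref{condrecur}), which converts mean convergence into probability convergence through Chebyshev's inequality.
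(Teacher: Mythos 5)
Your outline is sound and, after one correction, it proves the lemma; it follows the same skeleton as the paper (reduce everything to component sizes via Lemma \ref{lemma1}, treat small sizes through the convergence of $\nu_k(n)/n$, kill large sizes with Lemma \ref{lemma0} and (\ref{summn})), but it handles the fluctuation of the weights differently. The paper uses Lemma \ref{lemma1} to write the distributional identity $\frac1n\sum_{k=1}^n|W_{nk}|^\alpha\stackrel{d}{=}\frac1n\sum_{k=1}^n\sum_{i=1}^{\nu_k(n)}|T^0_{ki}|^\alpha$ with independent copies $T^0_{ki}$, then for each fixed size $k$ applies the law of large numbers together with $\nu_k(n)/n\stackrel{P}{\rightarrow}\frac{1-p}{p}\mathrm{B}(k,1+1/p)$, and bounds the tail $k\ge m$ by Markov's inequality; you instead run a conditional variance/Chebyshev argument around the size-conditional mean, which costs the $2\alpha$-moment of $T^0_k$ (available from Lemma \ref{lemma0} since $2\alpha\le4$) and the requirement $lp<2$ on the exponents, which your hypothesis indeed delivers in all three regimes of $r$; your treatment of the conditional mean by truncation then matches the paper's tail argument. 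Your anticipated ``chief obstacle'' is not one: the convergence $\nu_k(n)/n\stackrel{P}{\rightarrow}\frac{1-p}{p}\mathrm{B}(k,1+1/p)$ is quoted in the paper from Lemma 3.1 of \cite{B2024}, so the second-moment recursion you propose for it, while feasible, is unnecessary. Net effect: your route needs slightly stronger moment input ($2\alpha$ instead of $\alpha$) but is otherwise an equally valid and comparably short argument.

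The one genuine inaccuracy is the choice of conditioning $\sigma$-field. With $\mathcal F=\sigma(U_k,\epsilon_k:2\le k\le n)$ the full shapes of the percolation clusters are measurable, and given a cluster's shape the conditional law of its weight is that of the signed sum over that particular tree, which depends on the shape and not only on its size: for instance the conditional second moment equals $\sum_{u,v}(2r-1)^{d(u,v)}$, which already differs between a star and a path on four vertices. Hence the identity $\mathbb{E}(X_n\mid\mathcal F)=\frac1n\sum_j\nu_j(n)\mathbb{E}(|T^0_j|^\alpha)$ is false as written. The repair is immediate and costless: condition on the size vector $(N_1(n),\dots,N_n(n))$, which is exactly the conditioning for which Lemma \ref{lemma1} gives conditional independence and $W_{nj}\stackrel{d}{=}T^0_{N_j(n)}$; with that coarser $\sigma$-field both of your displayed formulas hold and the remainder of your argument (the $L^2$ estimate via Lemma \ref{lemma38}, the truncation of $Y_n$, and the identification of the limit $c(\alpha,p,r)$ from (\ref{calphapr})) goes through as you describe.
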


\begin{proof} Assume that $\{T_{nk}^0, n,k=1,2,\cdots\}$ is a sequence of independent random variables, indepedent of $\{\nu_{k}(n),  n,k=1,2,\cdots\}$, and
that
$T_{nk}^0\stackrel{d}{=} T_n^0$. By Lemma \ref{lemma1}, we have
\be
\frac{1}{n}\sum_{k=1}^n |W_{nk}|^{\alpha}\stackrel{d}{=}\frac{1}{n}\sum_{k=1}^n \sum_{i=1}^{\nu_k(n)} |T_{ki}^0|^{\alpha}. \label{sumfexp}
\ee
Recall that (see Lemma 3.1 in  \cite{B2024})
\bestar
\frac{\nu_k(n)}{n} \stackrel{P}{\longrightarrow} \frac{1-p}{p} \mathrm{B} (k, 1+1/p).   
\eestar
Consequently, for any fixed $k\ge 1$, we have
\be
\frac{1}{n}\sum_{i=1}^{\nu_k(n)} |T_{ki}^0|^{\alpha}= \frac{\nu_k(n)}{n} \frac{1}{\nu_k(n)}\sum_{i=1}^{\nu_k(n)} |T_{ki}^0|^{\alpha}  \stackrel{P}{\longrightarrow} \frac{1-p}{p}  \mathbb{E}(|T_k^0|^{\alpha})\mathrm{B}(k, 1+1/p).  \label{sumf1}
\ee
Applying Stirling's formula  gives that $\mathrm{B}(k, 1+1/p)\sim C_p k^{-1-1/p}$ as $k\rightarrow \infty$, where $C_p>0$ is a constant depending only on $p$.
Hence, if $(2r-1)\alpha p<1$, then  it follows from Lemma \ref{lemma0} that 
\be
&&\limsup_{m\rightarrow \infty}\limsup_{n\rightarrow \infty}\sum_{k=m+1}^n \mathbb{E}(|T_k^0|^{\alpha})\mathrm{B}(k,1+1/p)\nn\\ 
&\le& C\limsup_{m\rightarrow \infty}\limsup_{n\rightarrow \infty}\sum_{k=m+1}^n (a_r(k))^{\alpha/2}k^{-1-1/p}=0.  \label{sumf2}
\ee
By (\ref{summn}) in Lemma \ref{lemma38}, we get that for any $\varepsilon>0$,
\bestar
&&\limsup_{m\rightarrow \infty}\limsup_{n\rightarrow \infty}\mathbb{P}\Big(\frac{1}{n}\sum_{k=m+1}^n \sum_{i=1}^{\nu_k(n)} |T_{ki}^0|^{\alpha}>\varepsilon \Big)\\
&\le&\limsup_{m\rightarrow \infty}\limsup_{n\rightarrow \infty} \frac{1}{\varepsilon n}\sum_{k=m+1}^n \mathbb{E} \Big( \sum_{i=1}^{\nu_k(n)} |T_{ki}^0|^{\alpha}\Big)\\
&\le& \limsup_{m\rightarrow \infty}\limsup_{n\rightarrow \infty} \frac{1}{\varepsilon n}\sum_{k=m+1}^n \mathbb{E}(|T_k^0|^{\alpha})\mathbb{E} (\nu_k(n))\\
&\le& C\limsup_{m\rightarrow \infty}\limsup_{n\rightarrow \infty} \frac{1}{\varepsilon n}\sum_{k=m+1}^n (a_r(k))^{\alpha/2}\mathbb{E}(\nu_k(n))=0.
\eestar
Then (\ref{weakfsum}) follows by combining this with (\ref{sumfexp}), (\ref{sumf1}) and (\ref{sumf2}).
\end{proof}

We now turn to the proof of Theorem \ref{th1027}.

\begin{proof}[Proof of Theorem \ref{th1027}]
Applying  Lemma \ref{lemmaa1} gives that if $(2r-1)\alpha p<1$, then
\bestar
\frac{1}{n} \sum_{k=1}^n |W_{nk}|^{\alpha} \stackrel{P}{\rightarrow} c(\alpha, p, r).
\eestar

Since  $(2r-1)\alpha p<1$, we can choose $\beta$ such that $\max\{\alpha, 1\}<\beta<4, ~(2r-1)\beta p<1$ and $\beta p<2$.  By applying  Lemma \ref{lemma38} and using
similar arguments as in the proof of Lemma \ref{lemmaa1},  we have 
	\bestar
	\sup \limits_n \frac{1}{n}\sum_{k=1}^n \mathbb{E} (|W_{nk}|^{\beta})&=&
\sup \limits_n \frac{1}{n}\sum_{k=1}^n \mathbb{E}(|T_k^0|^{\beta})\mathbb{E} (\nu_k(n))\\
&\le& C\sup \limits_n \frac{1}{n}\sum_{k=1}^n (a_r(k))^{\beta/2}\mathbb{E} (\nu_k(n)) <\infty.
	\eestar
Hence conditions~(A4) and (A6) hold for $\alpha=2$ and $\alpha\in (0,2)$ respectively.

Now Theorem \ref{th1027} follows immediately from Theorems \ref{th2} and \ref{th3}.
\end{proof}

Finally, we present the proof of (\ref{c2pr}) in Remark \ref{rema1}.

\begin{proof}[Proof of (\ref{c2pr})]
It follows from (\ref{calphapr}), (\ref{ETn}) and  Lemma B.1 in 
\cite{BE2017} that if $(4r-2)p<1$, then 
\bestar
\mathbb{E}((T_n^0)^2)=\frac{\Gamma(n+4r-2)}{\Gamma(n)}\sum_{k=1}^{n} \frac{\Gamma(k)}{\Gamma(k+4r-2)}=\frac{1}{4r-3}\Big(\frac{\Gamma(n+4r-2)}{\Gamma(n)\Gamma(4r-2)}-n\Big),
\eestar
and consequently,
\bestar
c(2,p,r)&=&\frac{1-p}{p}\sum_{n=1}^{\infty} \mathbb{E}(|T_n^0|^{2})\mathrm{B}(n, 1+1/p)  \\
&=&  \frac{1-p}{p(4r-3)}\Big(\sum_{n=1}^{\infty} \frac{\Gamma(n+4r-2)\Gamma(1+1/p)}{\Gamma(4r-2)\Gamma(n+1+1/p)}-\sum_{n=1}^{\infty} \frac{\Gamma(n+1)\Gamma(1+1/p)}{\Gamma(n+1+1/p)} \Big) \\
&=&  \frac{1-p}{p(4r-3)} \Big(\frac{4r-2}{1/p-(4r-2)}-\frac{1}{1/p-1}\Big)\\
&=& \frac{1}{1-(4r-2)p}.
\eestar
The proof of  (\ref{c2pr}) is complete.
\end{proof}

\end{document}